\pgfplotsset{compat=newest}
\setlist[itemize,1]{leftmargin=\dimexpr 15pt} 
\declaretheorem[style=definition]{theorem}
\declaretheorem[style=definition]{corollary}
\declaretheorem[style=definition]{proposition}
\declaretheorem[style=definition]{lemma}
\declaretheorem[style=definition,qed=$\vartriangle$]{remark}
\declaretheorem[style=definition]{definition}
\declaretheorem[style=definition]{fact}
\newcommand{\nn}{\nonumber}               
\newcommand{\beq}{\begin{equation}}       
\newcommand{\eeq}{\end{equation}}         
\newcommand{\bseq}{\begin{subequations}}  
\newcommand{\eseq}{\end{subequations}}    
\newcommand{\bma}{\left[}                 
\newcommand{\ema}{\right]}                
\newcommand{\N}{\mathbb{N}}              
\newcommand{\R}{\mathbb{R}}              
\newcommand{\T}{\mathbb{T}}              
\newcommand{\W}{\mathbb{W}}              
\newcommand{\B}{\mathcal{B}}             
\newcommand{\M}{\mathcal{M}}             
\renewcommand{\L}{\mathsf{L}}            
\newcommand{\LTI}{\mathsf{LTI}}          
\newcommand{\Lfh}{\L_{\textup{fin}}}     
\newcommand{\LTIfh}{\LTI_{\textup{fin}}} 
\newcommand{\Ker}{\operatorname{ker}}         
\newcommand{\Image}{\operatorname{im}}        
\newcommand{\transpose}{\mathsf{T}}           
\newcommand{\norm}[1]{\left\lVert#1\right\rVert} 
\newcommand{\Span}{\operatorname{span}}       
\newcommand{\Grass}[2]{\operatorname{Gr}(#1,#2)} 
\newcommand{\Ort}[1]{\operatorname{O}(#1)}    
\definecolor{cb-green}{RGB}{27,158,119}   
\definecolor{cb-orange}{RGB}{217,95,2}    
\definecolor{cb-purple}{RGB}{117,112,179} 
\newacronym{LTI}{LTI}{Linear Time-Invariant}
\newacronym{AR}{AR}{Auto-Regressive}
\newacronym{MPUM}{MPUM}{Most Powerful Unfalsified Model}
\newacronym{SVD}{SVD}{Singular Value Decomposition}
\begin{document}
\title{Distances between finite-horizon linear behaviors}%
\author{A. Padoan, \IEEEmembership{Member, IEEE} and J. Coulson, \IEEEmembership{Member, IEEE}
\thanks{%
    A. Padoan is with the Department of Electrical and Computer Engineering, University of British Columbia, Vancouver, BC V6T 1Z4, Canada (email: \href{mailto:apadoan@ece.ubc.ca}{apadoan@ece.ubc.ca}). J. Coulson is with the Department of Electrical and Computer Engineering, University of Wisconsin–Madison, Madison, WI, USA ( email: \href{mailto:jeremy.coulson@wisc.edu}{jeremy.coulson@wisc.edu}).%
  }%
}%

\maketitle
\thispagestyle{empty}
\begin{abstract}
 The paper introduces a class of distances for linear  behaviors over finite time horizons.  These distances 
 allow for comparisons between finite-horizon linear behaviors represented by matrices of possibly different dimensions. They remain invariant under coordinate changes, rotations, and permutations, ensuring independence from input-output partitions. Moreover,  they naturally encode complexity-misfit trade-offs for \gls{LTI} behaviors, providing a principled solution to a longstanding puzzle in behavioral systems theory. The resulting framework characterizes  modeling as a minimum distance problem, identifying
the \gls{MPUM} as optimal among all systems unfalsified by a given dataset. Finally, we illustrate the value of these metrics in a time-series anomaly detection task, where their finer resolution yields superior performance over existing distances.
\end{abstract}
\begin{IEEEkeywords}
Data-driven control,  Subspace methods, Linear systems, Identification, Modeling
\end{IEEEkeywords}

\glsreset{MPUM}
\glsreset{LTI}

\section{Introduction}
\label{sec:introduction} 

\IEEEPARstart{L}{inear} systems theory has recently witnessed renewed interest in subspace representations~\cite{coulson2019data,markovsky2021behavioral,padoan2022behavioral,fazzi2023distance,sasfi2024subspace}.  Grounded in behavioral systems theory~\cite{willems1986timeI,willems1986timeII,willems1987timeIII}, which models systems as sets of trajectories, subspace representations provide a flexible alternative to classical parametric models, characterizing all system trajectories over a finite time horizon.  While subspace representations rely on linearity, they avoid committing to a specific model structure, such as  state-space representations or transfer functions. Instead, they can operate directly on raw data and characterize linear behaviors over finite horizons, without explicit parametrizations or \textit{a priori} complexity constraints. These properties have made these representations an established foundation for subspace identification algorithms~\cite{van1996subspace} and recent advances in data-driven control~\cite{coulson2019data}, often leading to superior performance compared to traditional parametric approaches~\cite{markovsky2021behavioral}.

A natural question arising in this context is:
``How should one compare subspace representations of finite-horizon linear behaviors?''
In system identification and data-driven control, subspace representations are often compared using distances induced by matrix norms. While simple to compute, such distances give rise  to several fundamental inconsistencies:  
(i) matrix representations of different dimensions cannot be meaningfully compared—even if they describe the same behavior;  (ii) distances may depend on arbitrary coordinate choices; and (iii) behaviors that are geometrically distant may appear deceptively close. All these issues arise because finite-horizon linear behaviors are, by nature, linear subspaces~\cite{willems1986timeI,willems1986timeII,willems1987timeIII}, and should be compared using metrics that respect their geometry. Prior work has explored this issue—most notably through the $L$-gap metric and Grassmann metric studied in~\cite{padoan2022behavioral} and~\cite{fazzi2023distance}. However, both metrics are useful only when comparing subspaces of equal dimension, as they either assign a trivial maximal value or become undefined when dimensions differ.
We introduce a new class of distances inherited from the doubly infinite Grassmannian~\cite{ye2016schubert}, which resolve the inconsistencies noted above, provide a principled answer to a long-standing puzzle in behavioral systems theory, and offer improved resolution in time series analysis applications.

\textbf{Related work} Distances have long played a foundational role in systems and control theory. The gap metric~\cite{zames1981uncertainty} and its refinements~\cite{vidyasagar1984graph,vinnicombe2001uncertainty} have shaped robust control by providing a topology for comparing feedback systems. In the context of behavioral systems theory for \gls{LTI} systems, notions of distance have been explored in~\cite{sasane2003distance,trentelman2013distance}, while subspace angles have been studied in~\cite{de2002subspace,roorda2001optimal}. A misfit–complexity trade-off for identification was proposed in~\cite{lemmerling2001misfit}. All prior work focuses on infinite-horizon behaviors. In contrast, we consider the finite-horizon setting, which is increasingly relevant in system identification and data-driven control~\cite{markovsky2021behavioral}. Closest to our work are~\cite{padoan2022behavioral,fazzi2023distance}, but the distances proposed in these works either assign a trivial maximal value or become undefined when the subspace dimensions differ.

\textbf{Contributions}: We introduce a new class of distances for finite-horizon linear behaviors (Theorem~\ref{thm:metrics_restricted_behaviors_any_dimension}), based on principal angles over the doubly infinite Grassmannian~\cite{ye2016schubert}. Such distances are able to compare subspaces of differing dimensions, remain invariant under coordinate changes, rotations, and permutations (Lemma~\ref{lemma:basis-invariance} and Theorem~\ref{thm:rotation-invariance}), and are thus independent of input-output partitions. They naturally encode a trade-off between complexity and misfit (Theorem~\ref{thm:angles-complexity}), yielding utility functions (Lemma~\ref{lemma:utility_function_property}) resolving a long-standing modeling question posed in~\cite{willems1987timeIII}. We show that the \gls{MPUM}~\cite{willems1986timeII} emerges as the optimal model among all unfalsified behaviors (Theorem~\ref{thm:optimization_equivalence}, Corollary~\ref{cor:optimization_model_class_unfalsified models}). Finally, we illustrate the value of these metrics in the context of anomaly detection, where their finer resolution offers improved sensitivity to anomaly severity over existing distances.

\textbf{Organization}: 
Section~\ref{sec:preliminaries} covers preliminary material.
Section~\ref{sec:puzzle} revisits a modeling puzzle. Section~\ref{sec:L_fin} defines the set of finite-horizon linear behaviors. Section~\ref{sec:doubly-infinite-Grassmannian} reviews the doubly infinite Grassmannian~\cite{ye2016schubert}. Section~\ref{sec:distances} characterizes the proposed metrics and their relevance to modeling. Section~\ref{sec:application} illustrates their use in time series anomaly detection.
Section~\ref{sec:conclusion} provides a summary and future research directions.

\section{Preliminaries} \label{sec:preliminaries} 

\subsection{Notation}
The set of positive integers is denoted by $\N$.   
The set of real numbers is denoted by $\R$. 
For ${T\in\N}$, the set of integers $\{1, 2, \dots , T\}$ is denoted by $\mathbf{T}$.  
A map $f$ from $X$ to $Y$ is denoted by ${f:X \to Y}$; $(Y)^{X}$ denotes the set of all such maps. The restriction of ${f:X \to Y}$ to a set ${X^{\prime}}$, with ${X^{\prime} \cap X \neq \emptyset}$, is defined as ${f|_{X^{\prime}}(x) = f(x)}$ for ${x \in X^{\prime}}$; if ${\mathcal{F} \subseteq (Y)^{X}}$, then ${\mathcal{F}|_{X^{\prime}} =\{ f|_{X^{\prime}} \, : \, f \in \mathcal{F}\}}$.
The shift operator ${\sigma}$ on sequences is defined as ${(\sigma w)_t= w_{t+1}}$; if ${\mathcal{S}}$ is a set of sequences,
then ${\sigma \mathcal{S} = \{\sigma w \, : \,  w\in\mathcal{S} \}}$. The transpose, image, and kernel of a matrix ${M\in\R^{m\times n}}$ are denoted by $M^{\top}$, $\Image\, M$, and $\Ker M$, respectively. The $m \times p$ zero matrix is denoted by $0_{m \times p}$ and the $m \times p$ matrix with ones on the main diagonal and zeros elsewhere is denoted by $I_{m \times p}$. For $m = p$, we use $0_p$ and $I_p$, omitting subscripts if clear  from the context.  Following~\cite{ye2016schubert}, the Grassmannian manifold of $k$-dimensional subspaces in $\mathbb{R}^N$ is denoted by $\Grass{k}{N}$, the {orthogonal group} of $k \times k$ orthogonal matrices is denoted by $\Ort{k}$, and the $i$-th principal angle between ${\mathcal{V} \in \Grass{k}{N}}$ and ${\mathcal{U} \in \Grass{l}{N}}$ is denoted by ${\theta_{i}(\mathcal{V}, \mathcal{U})}$ for ${i \in \mathbf{r}}$, with ${r = \min(k,l)}$. For brevity, we omit dependence on $\mathcal{V}$ and $\mathcal{U}$ if clear from the context. 

\subsection{Behavioral system theory}

Behavioral systems theory views systems as sets of trajectories~\cite{willems1986timeI,willems1986timeII,willems1987timeIII}. Given a \textit{time set} ${\T}$ and a  \textit{signal space} $\W$, a 
\textit{trajectory} is any function ${w: \T \to \W}$ and
a \textit{system} is defined by its \textit{behavior} $\B$, \textit{i.e.}, a subset of the set of all possible trajectories  ${\mathcal{W} = (\W)^{\T}}$.   In this spirit, a \textit{model} of a system is its behavior, and a \textit{model class}  ${\mathcal{M}\subseteq 2^{\mathcal{W}}}$  is a family of subsets of the set of possible trajectories  $ {\mathcal{W}} $. By convention, we identify systems with their behaviors and use  ``model,'' ``system,'' and ``behavior'' interchangeably.
Throughout this work, we exclusively focus on \textit{discrete-time} systems, with ${\T = \N}$ and ${\W = \R^q}$,  and adapt definitions accordingly.
A system $\B$ is \textit{linear} if $\B$ is a linear subspace and \textit{time-invariant} if $\B$ is shift-invariant, \textit{i.e.}, ${\sigma \B \subseteq \B}$.  A linear system $\B$ is \textit{complete} if $\B$ is closed in the topology of pointwise convergence~\cite{willems1986timeI}.  The model classes of all such discrete-time, complete, linear and \gls{LTI} systems are denoted by $\L^q$ and $\LTI^q,$ respectively.

\section{A motivating modeling puzzle} \label{sec:puzzle}

Given a \textit{data set} $\mathcal{D} \subseteq \mathcal{W}$ and a model class $\mathcal{M}\subseteq 2^{\mathcal{W}}$, the \textit{modeling} problem is to find a model ${\B \in \mathcal{M}}$ that ``best'' fits the data $\mathcal{D}$ according to one or more objectives~\cite{willems1986timeI,willems1986timeII,willems1987timeIII}. Basic objectives in a modeling problem are 
\begin{itemize}
\item ``simplicity,'' measured by a \textit{complexity} function $c(\B)$, and 
\item ``accuracy,'' measured by a \textit{misfit} function $\epsilon(\mathcal{D},\B).$
\end{itemize}
The complexity function $c(\B)$ quantifies the expressivity of a model $\B$. For example, the \textit{complexity} ${c:  \LTI^q   \to [0,1]^{\N}}$ of a model ${\B\in\LTI^q}$ is  a sequence  defined as~\cite[p.92]{willems1987timeIII}
\beq \label{eq:complexity}  
c_t(\B) = \dim  \B|_\mathbf{t} /qt , \quad t \in \N .  
\eeq  
\noindent
Intuitively, complexity increases with the number of linearly independent trajectories, as fewer constraints must be satisfied. 
The misfit function $\epsilon(\mathcal{D},\B)$ indicates how far the model $\B$ fails to explain the data set $\mathcal{D}$. 
For example, a misfit function for a model ${\B\in\LTI^q}$ and a data set $\mathcal{D}$ consisting of a single trajectory ${w_d\in \mathcal{W}}$ is~\cite[p.96]{willems1987timeIII}
\beq \nn \label{eq:misfit-vector} 
\epsilon(\mathcal{D}, \B) = \inf_{{w} \in \B } \norm{w_d - {w}},
\eeq 
where $\norm{\,\cdot\,}$ is a user-defined norm. Note that complexity and misfit functions are not rigidly specified by a unique definition; rather, their choice depends on the specific use of the model (e.g., estimation, prediction, or control). In fact, we will later  consider a different misfit function and demonstrate its relevance in   measuring distances between \textit{finite-horizon} \gls{LTI} behaviors (see~Section~\ref{sec:distances}).

\subsection{How to balance the complexity–misfit trade-off?}
Large complexity and misfit are both undesirable  and generally competing objectives. 
The complexity–misfit trade-off is thus a fundamental issue in modeling, which naturally leads to the multi-objective optimization problem
\beq \label{eq:modeling} 
\begin{array}{ll}
\underset{\B \in \mathcal{M}}{\mbox{minimize}} & \bma
\begin{array}{c}
c(\B)\\
\epsilon(\mathcal{D}, \B)
\end{array}
\ema 
\end{array} .
\eeq 

\noindent 
J. C. Willems proposes two distinct approaches to characterize optimal solutions of~\eqref{eq:modeling} for \gls{LTI} systems in~\cite{willems1987timeIII}. 
The first approach restricts the model class $\mathcal{M}$ to those \gls{LTI}  systems whose complexity does not exceed a fixed maximal complexity $c^{\textup{adm}}$,  defined as
\beq \nn \label{eq:model-class-maximal-admissible-complexity} 
\mathcal{M}_{\textup{adm}} =\left\{\B\in \LTI^q \,|\, c(\B) \le c^{\textup{adm}}  \right\},
\eeq 
with complexities partially ordered by pointwise domination: 
\beq \nn \label{eq:partial_order_complexities}
c(\B) \leq c(\B^{\prime}) \iff c_t(\B) \leq c_t(\B^{\prime}), \quad \forall \, t \in \N. 
\eeq 
\noindent
A model $\B$ is then optimal if it has the smallest complexity among all models minimizing the misfit within $\mathcal{M}^{\textup{adm}}$.
This echoes the classical system identification approach, where a model structure is fixed \textit{a priori} before parameter estimation. 
The second approach, in contrast, limits the model class $\mathcal{M}$ to those \gls{LTI}  systems achieving at most a given misfit $\epsilon^{\textup{tol}}$, i.e.,
\beq \nn \label{eq:model-class-maximal-tolerated-misfit} 
\mathcal{M}_{\textup{tol}} =\left\{\B\in \LTI^q \,|\, \epsilon(\mathcal{D}, \B) \le \epsilon^{\textup{tol}}  \right\}.
\eeq 
\noindent
A model $\B$ is then optimal if it has the smallest misfit among all models 
minimizing the complexity within $\mathcal{M}^{\textup{tol}}$.
Unlike the first approach, this method allows the complexity of the model to be adjusted to achieve a target misfit. However, finding an appropriate $\epsilon^{\textup{tol}}$ can be challenging in practice~\cite{willems1987timeIII}.

Both modeling approaches sidestep a fundamental question:
``Do there exist utility functions that naturally encode the trade-off between complexity and misfit?''
J. C. Willems himself emphasizes this difficulty in~\cite[p.89]{willems1987timeIII}: ``Most appealing of all is to have a methodology in which a combination of the complexity and the misfit is used in a utility function ... to be maximized. However, ... it seems difficult to come up with an intuitively justifiable utility function.''

In Section~\ref{sec:distances}, we propose a framework aimed at addressing this question by defining a class of ``natural'' utility functions that depend explicitly on system complexity and misfit, providing a clear geometric interpretation of the modeling problem as a minimum distance problem.

\section{Finite-horizon linear behaviors} \label{sec:L_fin}

We define the set of all finite-horizon linear behaviors as  
\beq \nn 
\Lfh = \left\{\, \B|_\mathbb{I} \mid \B \in \L^{q}, \ q \in \N, \ \mathbb{I}\subseteq\N, \ 0<|\mathbb{I}|< \infty\right\}\!,
\eeq
with $|\mathbb{I}|$ the cardinality of the set $\mathbb{I}$. For $q \in \N$, we also define 
$$ \Lfh^q = \left\{\, \B|_\mathbb{I} \mid \B \in \L^{q},  \ \mathbb{I}\subseteq\N, \ 0<|\mathbb{I}|< \infty\right\}\!. $$
The set $\Lfh$ contains all finite-horizon restrictions of complete linear systems, with any number of variables.  
The subset $\Lfh^q$ contains those with exactly $q$ variables.
The restriction operation maps linear behaviors to finite-dimensional subspaces.

\begin{fact} \label{fact:subspace}  
Every element of \(\Lfh\) is a finite-dimensional subspace.  
\end{fact}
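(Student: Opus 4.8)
The plan is to realize $\B|_\mathbb{I}$ as the image of the subspace $\B$ under a single linear map whose codomain is already finite-dimensional, so that finite-dimensionality and the subspace property both come for free.

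First I would fix $\B \in \L^q$ together with a finite index set $\mathbb{I} \subseteq \N$ satisfying $0 < |\mathbb{I}| < \infty$, and introduce the restriction (evaluation) map $\rho_\mathbb{I} \colon (\R^q)^\N \to (\R^q)^\mathbb{I}$ defined by $\rho_\mathbb{I}(w) = w|_\mathbb{I}$. The structural observation is that $\rho_\mathbb{I}$ is \emph{linear}: since addition and scalar multiplication of trajectories are defined pointwise, restricting a sum or a scalar multiple of sequences to $\mathbb{I}$ coincides with the sum or scalar multiple of their restrictions. By the very definition of $\Lfh$, one then has $\B|_\mathbb{I} = \rho_\mathbb{I}(\B)$.

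Next I would invoke the two elementary facts that do the work. On the one hand, $\B \in \L^q$ means precisely that $\B$ is a linear subspace of $(\R^q)^\N$, and the image of a subspace under a linear map is again a subspace; hence $\B|_\mathbb{I}$ is a linear subspace of $(\R^q)^\mathbb{I}$. On the other hand, the codomain is finite-dimensional: a map from the finite set $\mathbb{I}$ into $\R^q$ is specified by $q|\mathbb{I}|$ real numbers, so $(\R^q)^\mathbb{I} \cong \R^{q|\mathbb{I}|}$. Since every subspace of a finite-dimensional vector space is itself finite-dimensional, I conclude $\dim \B|_\mathbb{I} \le q|\mathbb{I}| < \infty$, which is the claim.

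There is essentially no obstacle here; the statement reduces to the linearity of restriction and the finiteness of $\mathbb{I}$. The only point worth flagging in the write-up is that \emph{only} linearity of $\B$ and finiteness of $\mathbb{I}$ are used: neither completeness nor time-invariance enters, and it is the finiteness of the horizon $\mathbb{I}$ — not any property of $\B$ beyond being a subspace — that guarantees finite dimensionality.
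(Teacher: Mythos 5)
Your proof is correct, and since the paper states this as a \emph{fact} without any proof, your argument is precisely the standard one it implicitly relies on: the restriction map \(w \mapsto w|_{\mathbb{I}}\) is linear, so \(\B|_{\mathbb{I}}\) is a subspace of \((\R^q)^{\mathbb{I}} \cong \R^{q|\mathbb{I}|}\), hence finite-dimensional. Your closing observation — that only linearity of \(\B\) and finiteness of \(\mathbb{I}\) are used, not completeness or time-invariance — is accurate and consistent with how the paper deploys the fact.
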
  

An important class of finite-horizon linear behaviors arises from time-invariant systems. Shift invariance allows one to consider restrictions to time intervals that depend only on the horizon length, without loss of generality. Accordingly, the set of all finite-horizon \gls{LTI} behaviors is defined as
\beq \nn
\LTIfh = \left\{\, \B|_\mathbf{L} \mid \B \in \LTI^{q},  \ q \in \N , \ L \in \N \, \right\},
\eeq  
and the subset of all finite-horizon \gls{LTI} behaviors with ${q \in \N}$ variables is defined as
\beq \nn
\LTIfh^q = \left\{\, \B|_\mathbf{L} \mid \B \in \LTI^{q},   \ L \in \N \, \right\}.
\eeq
Shift invariance determines the dimension of the elements of $\LTIfh^q$ in terms of integer invariants and the horizon length.
In fact, every system ${\B \in \LTI^{q}}$ admits a \textit{kernel representation},
$$\B = \ker R(\sigma) ,$$
where  $R(\sigma)$ is the operator defined by the polynomial matrix 
$R(z) = R_0 +R_1 z +\ldots+ R_{\ell}z^\ell,$
 with $\ell\in\N$, ${R_i\in\R^{p\times q}}$ 
for ${i\in\text{\boldmath$\ell$\unboldmath},}$ and 
$\Ker R(\sigma) = \{w\in\mathcal{W} : R(\sigma)w = 0\}.$
Without loss of generality, one may assume $\Ker R(\sigma)$  is a \emph{minimal}  kernel representation of $\B$,  \textit{i.e.}, $R(\sigma)$ has full row rank~\cite{willems1986timeI}.

The structure of a system ${\B\in\LTI^q}$ is characterized by its \textit{integer invariants}~\cite{willems1986timeI}. 
Among all such invariants, the most relevant for our purposes are defined as
\begin{itemize}
\item the \textit{number of inputs} ${m(\B) = q-\text{row dim} \, R(\sigma)}$, 
\item the \textit{lag} ${\ell(\B) = \max_{i}\{\deg\text{row}_i \, R(\sigma) \}}$, and 
\item the \textit{order} ${n(\B)=\sum_{i} \deg\text{row}_i \, R(\sigma) }$, 
\end{itemize}
where   
$\Ker R(\sigma)$  is a minimal  kernel representation of $\B$, while   
${\text{row dim}R(\sigma)}$ and ${\deg\text{row}_i R(\sigma)}$ are the number of rows and the degree of the $i$-th row of $R(\sigma)$, respectively. The integer invariants are intrinsic properties of an \gls{LTI} system, as they do not depend on its representation~\cite{willems1986timeI}. 

\begin{lemma}\cite{markovsky2021behavioral} \label{lemma:subspace} 
Let ${\B \in \LTI^{q}}$. For ${L\in\N}$,  with ${L\ge \ell(\B)}$, $\B|_\mathbf{L}$ is a subspace of dimension ${\dim \B|_\mathbf{L}  =  m(\B) L+ n(\B)}$. 
\end{lemma}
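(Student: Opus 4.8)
The plan is to fix a minimal input/state/output (i/s/o) representation of $\B$ and count degrees of freedom directly. By the structure theory of \gls{LTI} systems, the integer invariants of $\B$ are exactly the data of such a realization: $m(\B)$ is the number of (free) inputs, $n(\B)$ is the minimal state dimension, and $\ell(\B)$ is the observability index. Concretely, after selecting an input/output partition $w = (u,y)$ with $u$ free, there exist matrices $A,B,C,D$ of compatible sizes, with $A \in \R^{n(\B)\times n(\B)}$, such that
\[
\B = \{\, w=(u,y) \mid \exists\, x,\ x_{t+1}=Ax_t+Bu_t,\ y_t=Cx_t+Du_t \,\},
\]
where the latent state $x$ takes values in $\R^{n(\B)}$ and the initial value $x_1$ ranges freely over $\R^{n(\B)}$. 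I would begin by recalling why such a representation exists and why $x_1$ is unconstrained (state trimness of a minimal realization), since the freeness of $x_1$ is what supplies the additive term $n(\B)$.

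Next I would study the linear parametrization map $\Phi_L : \R^{n(\B)} \times (\R^{m(\B)})^L \to (\R^q)^L$ sending $(x_1,u_1,\dots,u_L)$ to the length-$L$ trajectory $(u_1,y_1,\dots,u_L,y_L)$ obtained by iterating the state recursion. Two claims then pin down the dimension. First, $\Image\,\Phi_L = \B|_\mathbf{L}$: the inclusion $\subseteq$ holds because any $(x_1,u_{1:L})$ extends (e.g. by setting future inputs to zero) to a full trajectory of $\B$, whose restriction is $\Phi_L(x_1,u_{1:L})$, and the inclusion $\supseteq$ holds because every $w\in\B$ carries a state sequence, so its restriction lies in the image. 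Second, $\Phi_L$ is injective precisely when $L \ge \ell(\B)$.

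The injectivity claim is the crux, and the step I expect to be the main obstacle. If $\Phi_L(x_1,u_{1:L}) = \Phi_L(x_1',u_{1:L}')$, then reading off the input component forces $u=u'$, so the state difference $\delta_t = x_t - x_t'$ obeys $\delta_{t+1}=A\delta_t$ with $C\delta_t = 0$ for $t=1,\dots,L$. Hence $\delta_1$ lies in the kernel of the truncated observability matrix $\mathcal{O}_L = [\,C^{\top}\ (CA)^{\top}\ \cdots\ (CA^{L-1})^{\top}\,]^{\top}$. The key structural fact I would isolate is that the observability index of a minimal realization equals the lag $\ell(\B)$, so $\mathcal{O}_L$ attains full column rank $n(\B)$ as soon as $L \ge \ell(\B)$. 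This forces $\delta_1 = 0$, hence $x_1 = x_1'$, giving injectivity. Establishing this index-equals-lag identity (or, equivalently, that $\ell(\B)$ steps suffice to reconstruct the state) is the real content; everything else is bookkeeping.

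Combining the two claims, $\B|_\mathbf{L} = \Image\,\Phi_L$ is a subspace (being the image of a linear map, consistent with Fact~\ref{fact:subspace}) and $\dim \B|_\mathbf{L} = \dim\big(\R^{n(\B)} \times (\R^{m(\B)})^L\big) = m(\B)L + n(\B)$. As an alternative route I would work from the minimal kernel representation $\B = \Ker R(\sigma)$ directly: collecting every shifted equation $\sum_k R_k w_{t+k}=0$ supported inside $\{1,\dots,L\}$ yields $\sum_i (L-\ell_i) = (q-m(\B))L - n(\B)$ scalar constraints on $(\R^q)^L$, where $\ell_i$ is the $i$-th row degree; showing these are independent (using a row-reduced $R$, whose leading row-coefficient matrix has full row rank) and that they cut out exactly $\B|_\mathbf{L}$ (the same extendability argument as above) gives $\dim \B|_\mathbf{L} = qL - \big((q-m(\B))L - n(\B)\big) = m(\B)L + n(\B)$. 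In this second route the obstacle simply migrates to proving full row rank of the truncated constraint matrix, which again hinges on row-reducedness of the minimal representation.
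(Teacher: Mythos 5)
The paper offers no proof of Lemma~\ref{lemma:subspace}: the result is imported verbatim from \cite{markovsky2021behavioral}, so there is no internal argument to compare against. Your primary route is correct and is, in essence, the standard proof in that reference: parametrize $\B|_{\mathbf{L}}$ by the linear map $\Phi_L(x_1,u_{1:L})$ arising from a minimal input/state/output representation, verify $\Image\,\Phi_L = \B|_{\mathbf{L}}$ (extendability of inputs gives one inclusion, existence of a state sequence for every trajectory the other), and obtain injectivity for $L \ge \ell(\B)$ from full column rank of the truncated observability matrix $\mathcal{O}_L$, whence $\dim \B|_{\mathbf{L}} = n(\B) + m(\B)L$. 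You also correctly identify the one nontrivial ingredient: that the lag equals the largest observability index $\nu_{\max}$ of a minimal realization, which combined with $\rank \mathcal{O}_L = \sum_i \min(\nu_i, L)$ gives full column rank exactly when $L \ge \nu_{\max} = \ell(\B)$ — a standard fact of the behavioral structure theory, but one you rightly flag as needing proof rather than assuming silently. Your alternative route via the row-reduced minimal kernel representation is equally sound, and in fact buys more: counting fully supported shifted equations and using that row-reducedness forces the truncated Toeplitz matrix to have full row rank yields $\dim \B|_{\mathbf{L}} = m(\B)L + \sum_i \min(\ell_i, L)$ for \emph{all} $L$, which specializes to the stated formula when $L \ge \ell(\B)$; the extendability argument there (kernel of the truncated Toeplitz matrix equals $\B|_{\mathbf{L}}$, not merely contains it) is the analogue of your injectivity step and you handle it correctly. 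One minor wording nit: you claim $\Phi_L$ is injective ``precisely when'' $L \ge \ell(\B)$, but only the ``if'' direction is used; the ``only if'' direction does hold, again by the rank formula for $\mathcal{O}_L$, though it is not needed for the lemma.
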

\noindent
By Lemma~\ref{lemma:subspace}, finite-horizon \gls{LTI} behaviors can be represented  by raw data matrices. A version of this principle is known in the literature as the \textit{Willems' fundamental lemma}~\cite{markovsky2021behavioral}. 

Fact~\ref{fact:subspace} and Lemma~\ref{lemma:subspace} show that both \( \Lfh \) and \( \LTIfh \) consist of finite-dimensional subspaces. As such, their elements can be naturally viewed as points on the \emph{doubly infinite Grassmannian} \( \mathrm{Gr}(\infty, \infty) \), which collects all finite-dimensional subspaces into a common ambient space, enabling comparisons across different dimensions and horizons. The use of this space provides a convenient framework in which dimension-agnostic metrics—such as those introduced in~\cite{ye2016schubert}—can be applied to finite-horizon behaviors. Section~\ref{sec:doubly-infinite-Grassmannian} provides a brief overview of this space and the metrics it supports, which form the basis of our proposed framework.

\section{The doubly infinite Grassmannian} \label{sec:doubly-infinite-Grassmannian}

The doubly infinite Grassmannian is the set of all finite-dimensional subspaces, each embedded in a common ambient space. We recall its formal definition from~\cite[p. 1187]{ye2016schubert}.

\begin{definition}
 The doubly infinite Grassmannian $\Grass{\infty}{\infty}$ is the direct limit of the direct system of Grassmannians  $\{ \Grass{k}{N} \,|\, (k,N) \in \N\times\N \},$ with canonical inclusion maps\footnote{By convention, if $(M-N-l+k)$ or $l-k$ are zero, the rows or columns corresponding to ${0}_{(M-N-l+k)\times k}$ and $I_{l-k}$ are omitted.} 
 \beq  \label{eq:inclusion_maps}
 \!\!\!
 \begin{aligned}[c]
 \iota_{NM}^{kl} \colon \Grass{k}{N} &\longrightarrow  \Grass{l}{M}  \\
 						  \Image V &\longmapsto 
 \Image 
 \bma \scalebox{0.9}{$
 \begin{array}{cc}
 V & {0} \\
 {0}_{(M-N-l+k) \times k} & {0} \\
 {0} & I_{l-k}
 \end{array} $}
 \ema , 
 \end{aligned} 
 \eeq
 for all ${k\in \mathbf{l}}$ and ${N \in \mathbf{M}  }$ such that ${l-k\le M-N}$. 
\end{definition}
\noindent
The family of inclusion maps~\eqref{eq:inclusion_maps} identifies each element of $\Grass{k}{N}$ with its image in $\Grass{l}{M}$,
treating a subspace \( \mathcal{V} \) and its image \( \iota_{NM}^{kl}(\mathcal{V}) \) as the same object. 
Fig.~\ref{fig:injections} illustrates this point for ${k = 1}$, ${N = 2}$, ${l = 2}$, and ${M = 3}$.
\begin{figure}[h!]
    \centering
\begin{tikzpicture}[black,scale=.95, line cap=round, line join=round, >=latex, thick]
    \begin{scope}[shift={(-2,0)}]
        
    \draw[black] (0,0) circle (1);
    
    \draw[thick, -latex] (-1.5,0) -- (1.5,0) node[right] {${x}$};
    \draw[thick, -latex] (0,-1.5) -- (0,1.5) node[above] {${y}$};

    \node[below right] at (1,0) {${1}$};

    \draw[thick, cb-orange, -latex] (0,0) -- (60:1.0008) node[anchor=north east, xshift=17.5pt, yshift=9.5pt] {${V}$};

    \draw[thick, dashed, cb-orange] (-0.85,-1.5) -- (60:1.75) node[above right] {${\mathcal{V}}$};
    
    \end{scope}


    \draw[very thick, -latex] (.25,0) -- (1.25,0)
        node[midway, above] {\large $\iota_{NM}^{kl}$};


    \begin{scope}[shift={(3.5,0)}, x={(1cm,0cm)}, y={(0.5cm,0.5cm)}, z={(0cm,1cm)}]

        \def\x{0.5}    
        \def\y{0.866}  
        \def\X{1.75*\x} 
        \def\Y{1.75*\y} 
        \def\z{1}      

        \fill[cb-purple, opacity=0.2] (0,-1.5,-1.5) -- (0,1.5,-1.5) -- (0,1.5,1.5) -- (0,-1.5,1.5) -- cycle;

        \draw[thick, -latex] (-1.5,0,0) -- (1.5,0,0) node[right] {${x}$};
        \draw[thick, -latex] (0,-1.5,0) -- (0,1.5,0) node[above] {${y}$};
        \draw[thick, -latex] (0,0,-1.) -- (0,0,1.5) node[left] {${z}$};

        \draw[thick] (0,0,0) circle (1);

        \node[below] at (1,0,0) {${1}$};

        \foreach \t in {240,245,...,265} {
            \draw[very thick, cb-green] ({cos(\t)},{sin(\t)},0) -- ({cos(\t+5)},{sin(\t+5)},0);
        }

        \node[cb-green, right] at ({1.75*cos(255)},{1.75*sin(255)},0) {$\theta_1$};

        \draw[-latex, cb-orange] (0,0,0) -- (\x,\y,0) node[above] {$V$};

        \draw[dashed, cb-orange] (0,0,0) -- (-\X,-\Y,0);
        \draw[dashed, cb-orange] (0,0,0) -- (\X,\Y,0) node[above right] {$\mathcal{V}$};

        \node[cb-orange, anchor=north east, xshift=15pt, yshift=15pt] at (\X,\Y,\z) {$\iota_{NM}^{kl}(\mathcal{V})$};
        \node[black, below] at (0,0,-1.5) {${\mathcal{U}}$};

        \fill[cb-orange, opacity=0.1] (-\X,-\Y,0) -- (-\X,-\Y,\z) -- (\X,\Y,\z) -- (\X,\Y,0) -- cycle;
        \fill[cb-orange, opacity=0.1] (-\X,-\Y,0) -- (-\X,-\Y,-\z) -- (\X,\Y,-\z) -- (\X,\Y,0) -- cycle;
        
    \end{scope}

\end{tikzpicture}
\caption{The map $ \iota_{NM}^{kl} $ embeds $ \Grass{k}{N} $ into $ \Grass{l}{M} $, as illustrated for ${k = 1}$, ${N = 2}$, ${l = 2}$, and ${M = 3}$. The subspace ${\mathcal{V} = \Image (V)}$ in $ \R^2 $ \textcolor{cb-orange}{(dashed, orange)} is mapped to the corresponding subspace in ${\R^3}$ \textcolor{cb-orange}{(shaded, orange)}. The principal angle $\theta_1$ \textcolor{cb-green}{(solid, green)}  measures the distance to the subspace ${\mathcal{U} = \Image  [ \,0  \;\; I\,]^\top \in  \R^3}$ \textcolor{cb-purple}{(shaded, purple)}.}
    \label{fig:injections}
\end{figure}
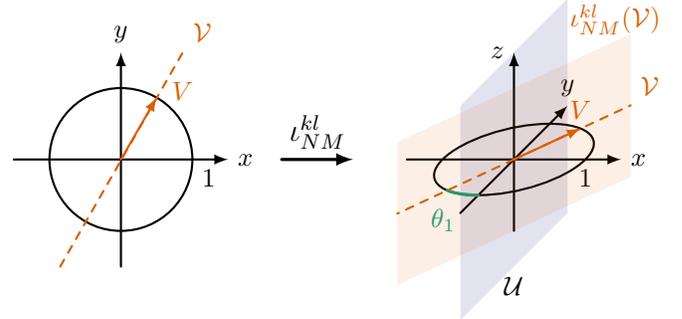

A number of standard Grassmannian metrics can be extended to the doubly infinite Grassmannian~\cite{ye2016schubert}. 
While several metrics assign maximal distance to subspaces of different dimensions, others 
are able to capture dimension mismatches, offering a finer resolution.   Table~\ref{tab:distances} recalls three such metrics: the \textit{chordal}, \textit{Grassmann}, and \textit{Procrustes} metrics.

\begin{table}[h!]
\centering
\caption{Metrics between subspaces $\mathcal{V}$ and $\mathcal{U}$, with ${\dim \mathcal{V} = k}$ and ${\dim \mathcal{U} = l}$, in terms of the principal angles $\theta_i(\mathcal{V},\mathcal{U})$.}
\begin{tabular}{ll} \hline
Name	& Metric 										 \\[0.3em] \hline
Chordal			
& $d_\kappa(\mathcal{V},\mathcal{U})	=\left(|k -l|+\displaystyle\sum_{i=1}^{\min(k,l)} (\sin \theta_i)^2 \right)^{1/2}$ \\
Grassmann		
& $d_\gamma(\mathcal{V},\mathcal{U})	=\left(|k -l| \tfrac{\pi^2}{4} + \displaystyle\sum_{i=1}^{\min(k,l)} \theta_i^2 \right)^{1/2}$        \\
Procrustes		
& $d_\pi(\mathcal{V},\mathcal{U})	=\left(|k -l|+\displaystyle2\sum_{i=1}^{\min(k,l)} \left(\sin \tfrac{\theta_i}{2}\right)^2 \right)^{1/2}$\\[0.3em] \hline
\end{tabular}
\vspace{0.1cm}
\label{tab:distances}
\end{table}%

\begin{proposition} \cite{ye2016schubert} \label{prop:metrics}
Every expression in Table~\ref{tab:distances} defines a metric on~$\Grass{\infty}{\infty}$. \end{proposition}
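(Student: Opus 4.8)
\section*{Proof proposal}

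The plan is to verify the three metric axioms—non-negativity with definiteness, symmetry, and the triangle inequality—for each expression in Table~\ref{tab:distances}, exploiting the fact that each reduces to a \emph{classical} Grassmannian distance once the two subspaces are brought to a common dimension. Non-negativity is immediate, since every summand and $|k-l|$ are non-negative; symmetry follows from $\theta_i(\mathcal{V},\mathcal{U}) = \theta_i(\mathcal{U},\mathcal{V})$ and $|k-l| = |l-k|$. Definiteness is equally direct: each expression vanishes if and only if $k = l$ and $\theta_i = 0$ for all $i$, i.e.\ if and only if $\mathcal{V} = \mathcal{U}$. The substance of the proof lies in the triangle inequality together with the requirement that each formula be \emph{well-defined} on $\Grass{\infty}{\infty}$, i.e.\ constant on the equivalence classes of the direct system defined by the inclusion maps~\eqref{eq:inclusion_maps}.

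The key reduction is as follows. Given $\mathcal{V}\in\Grass{k}{N}$ and $\mathcal{U}\in\Grass{l}{M}$ with $k\le l$, I would use $\iota_{NM'}^{kl}$ and $\iota_{MM'}^{ll}$ to realise both as subspaces $\hat{\mathcal{V}},\hat{\mathcal{U}}$ of the \emph{same} dimension $p = \max(k,l)$ inside a common $\R^{M'}$. The block form of~\eqref{eq:inclusion_maps} is engineered so that the principal angles between $\hat{\mathcal{V}}$ and $\hat{\mathcal{U}}$ consist of the $\min(k,l)$ genuine angles $\theta_i(\mathcal{V},\mathcal{U})$ together with exactly $|k-l|$ right angles $\pi/2$. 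Since $\sin^2(\pi/2)=1$, $(\pi/2)^2 = \pi^2/4$, and $2\sin^2(\pi/4)=1$, each dimension-gap penalty term in Table~\ref{tab:distances} is precisely the contribution of those $|k-l|$ right angles; hence each table expression equals the corresponding \emph{equal-dimensional} chordal, Grassmann, or Procrustes distance between $\hat{\mathcal{V}}$ and $\hat{\mathcal{U}}$.

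I would then invoke the classical fact that, on a fixed Grassmannian of subspaces of equal dimension, each of these three is a genuine metric realised as the restriction of a metric on an ambient space: the chordal distance is a scalar multiple of the Frobenius distance $\norm{P_{\hat{\mathcal{V}}} - P_{\hat{\mathcal{U}}}}$ between the orthogonal projectors; the Procrustes distance is a scalar multiple of the orthogonal-Procrustes quotient distance $\min_{Q\in\Ort{p}}\norm{\hat V - \hat U Q}$ over orthonormal bases $\hat V,\hat U$; and the Grassmann distance is the Riemannian geodesic distance $\big(\sum_i \theta_i^2\big)^{1/2}$. Each inherits the triangle inequality from its ambient norm or its geodesic structure. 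For three subspaces $\mathcal{V},\mathcal{U},\mathcal{W}$ of dimensions $k,l,m$, I would embed all of them into a common Grassmannian of dimension $\max(k,l,m)$ in a common $\R^{M'}$ and apply the equal-dimensional triangle inequality there.

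The main obstacle is the well-definedness step, on which everything else rests: one must show that the value of each formula is unchanged under the inclusion maps, so that passing to a common dimension alters none of the pairwise distances. Equivalently, the maps $\iota_{NM}^{kl}$ must be \emph{isometries} for each distance. This is where the delicate bookkeeping occurs, since an inclusion changes the number of genuine principal angles while introducing new right angles, and one must check that the gained $\sin^2$-, $(\cdot)^2$-, or $2\sin^2(\cdot/2)$-contributions are compensated exactly by the change in the $|k-l|$ penalty. Verifying this calibration—that the coefficients $1$, $\pi^2/4$, and $1$ are precisely the right-angle values of the respective per-angle functions—is the crux of the argument, and the reason these specific expressions, rather than arbitrary weightings, extend to metrics on $\Grass{\infty}{\infty}$.
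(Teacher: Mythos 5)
Your proposal is correct and follows essentially the same route as the paper's source: the paper proves nothing itself but defers to~\cite{ye2016schubert}, where the metric property is established exactly as you describe—showing the inclusion maps~\eqref{eq:inclusion_maps} are isometries because padding with coordinates orthogonal to the ambient space contributes $|k-l|$ principal angles of $\pi/2$, whose per-angle values $\sin^2(\pi/2)=1$, $(\pi/2)^2=\pi^2/4$, and $2\sin^2(\pi/4)=1$ calibrate the dimension-gap penalties, and then inheriting the triangle inequality from the classical equal-dimension chordal, geodesic, and Procrustes metrics. Your identification of well-definedness on the direct limit as the crux, and your handling of three subspaces via a common embedding (where the shared padding yields zero angles contributing nothing), match the structure of that argument.
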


\noindent
All metrics in Table~\ref{tab:distances} depend on the principal angles but differ in how they aggregate them: chordal uses the $2$-norm of their sines, Grassmann the $2$-norm of the angles themselves, and Procrustes the Frobenius norm between orthogonal projectors. They emphasize different aspects, namely, numerical robustness, geodesic distance, and alignment error, respectively.

\begin{remark}
    All metrics in Table~\ref{tab:distances} can be expressed as
    \begin{equation} \label{eq:general_distance}
        d_*(\mathcal{V}, \mathcal{U}) = \left(\delta_*(\mathcal{V}, \mathcal{U})^2 + \alpha_*^2 |\dim \mathcal{V} - \dim \mathcal{U}|\right)^{1/2},
    \end{equation}
    where  $*\in\{ \kappa, \gamma, \pi\}$,  ${\alpha_* \in \R}$ is defined as
    \begin{equation} \label{eq:coefficients}
         \alpha_\kappa = 1, \quad \alpha_\gamma = \frac{\pi}{2},  \quad \alpha_\pi = 1,
    \end{equation}
    and $\delta_*(\mathcal{V}, \mathcal{U})$ is defined as
    \bseq \label{eq:premetrics}
    \begin{align}
    \delta_\kappa(\mathcal{V},\mathcal{U}) &= \scalebox{1}{$\left(\sum_{i=1}^{\min(k,l)} (\sin \theta_i)^2 \right)^{1/2}$}, \\
    \delta_\gamma(\mathcal{V},\mathcal{U}) &= \scalebox{1}{$\left(\sum_{i=1}^{\min(k,l)} \theta_i^2 \right)^{1/2}$}, \\
    \delta_\pi(\mathcal{V},\mathcal{U}) &= \scalebox{1}{$\left( 2\sum_{i=1}^{\min(k,l)} \left(\sin \frac{\theta_i}{2}\right)^2 \right)^{1/2}$}.
    \end{align}
    \eseq
    \noindent
    All metrics can be efficiently evaluated, as the ranks and principal angles can be computed via QR decomposition and \gls{SVD}~\cite{ye2016schubert}, with a worst-case complexity of $\mathcal{O}(qT \cdot (k^2 + l^2) + kl \min\{k, l\})$.
\end{remark}

\section{Distances for finite-horizon linear behaviors}
\label{sec:distances}

We now define a class of metrics on the set of all finite-horizon linear behaviors $\Lfh$, inherited from the geometry of the doubly infinite Grassmannian $\Grass{\infty}{\infty}$. 

\begin{theorem}[Induced metrics on $\Lfh$] \label{thm:metrics_restricted_behaviors_any_dimension} Every metric on $\Grass{\infty}{\infty}$ defines a metric on~$\Lfh$. In particular, every metric in Table~\ref{tab:distances} defines a metric on~$\Lfh$.
\end{theorem}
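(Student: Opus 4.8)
The plan is to realize $\Lfh$ as a subset of $\Grass{\infty}{\infty}$ and then invoke the elementary fact that a metric restricts to a metric on any subset; the second sentence will then follow by instantiation. Concretely, I would first use Fact~\ref{fact:subspace}: since every $\B \in \Lfh$ is a finite-dimensional subspace, say $\B \in \Grass{k}{N}$ for suitable $k, N \in \N$, it determines a well-defined point $\iota(\B) \in \Grass{\infty}{\infty}$ through the direct-limit construction. This yields a canonical map $\iota \colon \Lfh \to \Grass{\infty}{\infty}$, and the idea is to transport any metric $d$ on $\Grass{\infty}{\infty}$ back to $\Lfh$ by setting $d_{\Lfh}(\B_1, \B_2) = d(\iota(\B_1), \iota(\B_2))$.

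I would then verify the metric axioms for $d_{\Lfh}$. Nonnegativity, symmetry, and the triangle inequality are inherited verbatim from $d$, since each is a pointwise condition on the values $d(\iota(\cdot), \iota(\cdot))$ in which $\iota$ enters only through evaluation. The one axiom demanding genuine work is the identity of indiscernibles: from $d_{\Lfh}(\B_1, \B_2) = 0$ one obtains $\iota(\B_1) = \iota(\B_2)$, and one must conclude $\B_1 = \B_2$, i.e.\ that $\iota$ separates points. Equivalently, the general statement reduces to the standard principle that a pullback of a metric along an injective map is again a metric.

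This injectivity is the crux, and I expect it to be the main obstacle. The subtlety is that the inclusion maps~\eqref{eq:inclusion_maps} identify a subspace with its zero-padded images of higher dimension and horizon, so two restrictions living in different ambient spaces $(\R^{q})^{\mathbb{I}}$ could in principle collapse to a single point of $\Grass{\infty}{\infty}$. The resolution is that this is precisely the identification intended for finite-horizon behaviors---different-dimensional representations of the same behavior ought to coincide---so $\iota$ is point-separating once elements of $\Lfh$ are regarded as genuine points of $\Grass{\infty}{\infty}$; making this rigorous (by choosing canonical representatives, or by passing to the quotient under $\B_1 \sim \B_2 \iff \iota(\B_1) = \iota(\B_2)$) is the one place I would proceed carefully. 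With the general claim in hand, the final statement is immediate: by Proposition~\ref{prop:metrics} each expression in Table~\ref{tab:distances} is a metric on $\Grass{\infty}{\infty}$, hence each restricts, through $d_{\Lfh}$, to a metric on $\Lfh$.
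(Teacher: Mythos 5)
Your proposal is correct and is essentially the paper's own argument: by Fact~\ref{fact:subspace} every element of $\Lfh$ is a finite-dimensional subspace, hence a point of $\Grass{\infty}{\infty}$, and the paper's proof consists precisely of restricting an arbitrary metric on $\Grass{\infty}{\infty}$ (in particular each metric of Table~\ref{tab:distances}, via Proposition~\ref{prop:metrics}) to the subset $\Lfh$. The injectivity question you single out as the crux is handled in the paper exactly along the lines you suggest---elements of $\Lfh$ are regarded outright as points of $\Grass{\infty}{\infty}$, i.e.\ your canonical-representative/quotient option is adopted implicitly, so identity of indiscernibles holds by restriction and no further verification is carried out there.
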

  
\begin{proof} 
The argument is inspired by~\cite[Proposition 1]{padoan2022behavioral}.
By Fact~\ref{fact:subspace}, each element of $\Lfh$ is a finite-dimensional subspace, making $\Lfh$ a subset of $\Grass{\infty}{\infty}$. Since restricting a metric to a subset yields a well-defined metric on that subset, any metric on $\Grass{\infty}{\infty}$ defines a metric on $\Lfh$.
\end{proof}

\noindent
Table~\ref{tab:distances} presents metrics that exhibit properties relevant to the analysis of finite-horizon linear behaviors, as discussed next.

\begin{lemma}[Coordinate change invariance] \label{lemma:basis-invariance} 
Let ${H \in \mathbb{R}^{m \times n}}$ and ${H^{\prime} \in \mathbb{R}^{m^{\prime} \times n^{\prime}}}$ be non-zero matrices.
Let $d$ be a metric in Table~\ref{tab:distances}. Then 
${d (\Image(H  P) , \Image(H^{\prime} P^{\prime})) = d (\Image H , \Image H^{\prime})  }$
for all invertible ${P \in \R^{n \times n }}$ and ${P^{\prime} \in \R^{n^{\prime} \times n^{\prime} }}$. 
\end{lemma}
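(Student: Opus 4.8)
The plan is to reduce the claim to the elementary observation that right-multiplication by an invertible matrix leaves the column space unchanged, and then to invoke the fact that every metric in Table~\ref{tab:distances} is a function of the underlying subspaces alone. Recall from the notation that the principal angles $\theta_i(\mathcal{V},\mathcal{U})$ and the dimensions $k = \dim\mathcal{V}$, $l = \dim\mathcal{U}$ are defined intrinsically in terms of the subspaces $\mathcal{V}$ and $\mathcal{U}$, not in terms of any particular spanning matrix; consequently each of $d_\kappa$, $d_\gamma$, $d_\pi$ depends only on the pair $(\mathcal{V},\mathcal{U})$. This is exactly what Proposition~\ref{prop:metrics} guarantees: these are well-defined metrics on $\Grass{\infty}{\infty}$, a space whose points are subspaces.

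First I would verify that $\Image(HP) = \Image H$ whenever $P \in \R^{n\times n}$ is invertible. Indeed, $\Image(HP) = \{HPx : x \in \R^n\}$, and since $P$ is a bijection of $\R^n$ the vector $Px$ ranges over all of $\R^n$ as $x$ does, so $\{HPx : x \in \R^n\} = \{Hy : y \in \R^n\} = \Image H$. Applying the same argument to $H^{\prime}$ and $P^{\prime}$ gives $\Image(H^{\prime} P^{\prime}) = \Image H^{\prime}$. Here the hypothesis that $H$ and $H^{\prime}$ are non-zero is what ensures $\Image H$ and $\Image H^{\prime}$ have dimension at least one, so that they are genuine points of $\Grass{\infty}{\infty}$ and the metric is applicable.

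With these two identities in hand the conclusion is immediate: substituting $\Image(HP) = \Image H$ and $\Image(H^{\prime}P^{\prime}) = \Image H^{\prime}$ into $d(\Image(HP),\Image(H^{\prime}P^{\prime}))$ yields precisely $d(\Image H,\Image H^{\prime})$, for each $d$ in Table~\ref{tab:distances}.

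I do not anticipate a genuine obstacle: the substance of the lemma is that the ``coordinate changes'' $P$ and $P^{\prime}$ merely re-express a basis of a fixed subspace, and the metrics were built on $\Grass{\infty}{\infty}$ to be insensitive to exactly this. The only point demanding a little care is conceptual rather than computational—making explicit that the quantities entering the formulas in Table~\ref{tab:distances} are invariants of subspaces, so that equality of the subspaces forces equality of the distances.
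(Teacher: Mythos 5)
Your proof is correct and follows exactly the paper's own argument: the identity $\Image(HP) = \Image(H)$ for invertible $P$, combined with the fact that the metrics in Table~\ref{tab:distances} depend only on the subspaces (via principal angles and dimensions), which is guaranteed by Proposition~\ref{prop:metrics}. Your version simply spells out the surjectivity argument for the image identity, which the paper leaves implicit.
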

\noindent

\noindent
Lemma~\ref{lemma:basis-invariance} follows from the identity \( {\Image(H P) = \Image(H)} \) for any invertible matrix \( {P \in \mathbb{R}^{n \times n}} \), where \( {H \in \mathbb{R}^{m \times n}} \) is a matrix representative of a behavior in \( \Lfh \). It formalizes that the subspace \( \Image(H) \), and thus the behavior it represents, is invariant under changes of basis in its column space. Thus, the distance between two behaviors in \( \Lfh \) is invariant under such coordinate transformations. The same invariance holds for all metrics listed in Table~\ref{tab:distances}, each of which depends only on the subspaces and is thus well-defined on equivalence classes of matrix representations that define the same subspaces.

\begin{remark}[Limitations of matrix distances]\label{rem:limitations_matrix_metrics}
Let \({H \in \mathbb{R}^{m \times n}}\) and \({H' \in \mathbb{R}^{2 \times 1}}\), with \({H = \epsilon I_{m\times n}}\) and  \( {H' = [0\;\;\epsilon]^\top  }\)  for some \({\epsilon > 0}\). These matrices can only be compared using distances induced by matrix norms (e.g., the Frobenius norm) when they have the same dimensions (${m = 2}$ and ${n= 1}$), in which case they yield a distance proportional to \( \epsilon \), which can be arbitrarily small. In contrast, any metric in Table~\ref{tab:distances} assigns maximal distance between the subspaces spanned by their columns. This highlights a key advantage of the metrics in Table~\ref{tab:distances} — they capture dimension changes and preserve geometric structure, while distances induced by matrix norms depend on representation and may assign arbitrarily small values to geometrically distant objects. \end{remark} 

\noindent
A natural requirement for distances on subspaces is that they depend only on their relative positions, remaining invariant under global rotations of the ambient space. Formally, a distance ${d}$ on ${\Grass{k}{N}}$ is said to be \textit{rotationally invariant} if
$
{d(Q \cdot \mathcal{V}, Q \cdot \mathcal{U}) = d(\mathcal{V}, \mathcal{U})}
$
for all ${Q \in \Ort{N}}$ and ${\mathcal{V}, \mathcal{U} \in \Grass{k}{N}}$,  where the (left) group action of $\Ort{N}$ on $\Grass{k}{N}$ is defined as
${Q \cdot \mathcal{V} = \Image(QV),}$
with ${V \in \mathbb{R}^{N \times k}}$ such that ${V^{\transpose} V = I}$ and ${\mathcal{V} = \Image(V)}$.
As shown in~\cite[Theorem~2]{ye2016schubert}, any distance satisfying this invariance property must be a function of the principal angles between ${\mathcal{V}}$ and ${\mathcal{U}}$. Next, we establish an analogous property for finite-horizon linear behaviors of different dimensions.

\begin{theorem}[Rotational invariance] \label{thm:rotation-invariance} 
Let $\B|_{\mathbb{I}}, \B^{\prime}|_{\mathbb{I}} \in \Lfh^q$ and let \( d \) be one of the metrics in Table~\ref{tab:distances}. Then 
$$d(Q \cdot {\B}|_{\mathbb{I}}, Q \cdot \B^{\prime}|_{\mathbb{I}}) = d({\B}|_{\mathbb{I}}, \B^{\prime}|_{\mathbb{I}}),$$
for all $Q \in O(q|\mathbb{I}|)$. 
\end{theorem}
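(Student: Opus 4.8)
The plan is to exploit the structural fact, recorded in Table~\ref{tab:distances} and in the representation~\eqref{eq:general_distance}, that each of the chordal, Grassmann, and Procrustes metrics is a function solely of the two dimensions $\dim \mathcal{V}$, $\dim \mathcal{U}$ and the principal angles $\theta_i(\mathcal{V},\mathcal{U})$. Consequently, to establish the claimed invariance it suffices to verify that the (left) action $\mathcal{V} \mapsto Q \cdot \mathcal{V}$ of $Q \in \Ort{q|\mathbb{I}|}$ leaves both ingredients unchanged. First I would recall that, by Fact~\ref{fact:subspace}, $\B|_{\mathbb{I}}$ and $\B^{\prime}|_{\mathbb{I}}$ are finite-dimensional subspaces of the common ambient space $\R^{q|\mathbb{I}|}$; this is precisely why the relevant group is $\Ort{q|\mathbb{I}|}$, and it guarantees that the principal angles $\theta_i(\B|_{\mathbb{I}}, \B^{\prime}|_{\mathbb{I}})$ are well-defined for $i \in \mathbf{r}$ with $r = \min(\dim \B|_{\mathbb{I}}, \dim \B^{\prime}|_{\mathbb{I}})$.

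The first and essentially trivial step is that dimensions are preserved: since $Q$ is invertible, $\dim(Q \cdot \B|_{\mathbb{I}}) = \dim \B|_{\mathbb{I}}$ and likewise for $\B^{\prime}|_{\mathbb{I}}$, so the term $|\dim \mathcal{V} - \dim \mathcal{U}|$ in~\eqref{eq:general_distance} is unaffected. The second step is to show that the principal angles themselves are invariant. Here I would fix orthonormal bases $V \in \R^{q|\mathbb{I}|\times k}$ and $U \in \R^{q|\mathbb{I}|\times l}$ of $\B|_{\mathbb{I}}$ and $\B^{\prime}|_{\mathbb{I}}$, with $V^{\transpose}V = I$ and $U^{\transpose}U = I$, and invoke the standard characterization that the cosines of the principal angles $\theta_i$ are exactly the singular values of the $k \times l$ matrix $V^{\transpose}U$. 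Since $(QV)^{\transpose}(QV) = V^{\transpose}Q^{\transpose}QV = V^{\transpose}V = I$, the columns of $QV$ form an orthonormal basis of $Q \cdot \B|_{\mathbb{I}} = \Image(QV)$, and similarly $QU$ is an orthonormal basis of $Q \cdot \B^{\prime}|_{\mathbb{I}}$. The associated cross-Gram matrix is then $(QV)^{\transpose}(QU) = V^{\transpose}Q^{\transpose}QU = V^{\transpose}U$, identical to the original, so its singular values—and hence $\theta_i(Q \cdot \B|_{\mathbb{I}}, Q \cdot \B^{\prime}|_{\mathbb{I}}) = \theta_i(\B|_{\mathbb{I}}, \B^{\prime}|_{\mathbb{I}})$ for every $i \in \mathbf{r}$—are unchanged.

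Combining the two steps, every quantity entering the formulas of Table~\ref{tab:distances}, equivalently the representation~\eqref{eq:general_distance}, is fixed by the action of $Q$, which yields $d(Q \cdot \B|_{\mathbb{I}}, Q \cdot \B^{\prime}|_{\mathbb{I}}) = d(\B|_{\mathbb{I}}, \B^{\prime}|_{\mathbb{I}})$ for each of the three metrics. I expect the only point demanding genuine care to be the passage through subspaces of \emph{unequal} dimension: the identity $Q^{\transpose}Q = I$ renders the computation dimension-agnostic precisely because it never forces $V$ and $U$ to share a column count, but I would make explicit that $V^{\transpose}U$ is rectangular ($k \times l$) and that the identification ``principal angles $=$ singular values of $V^{\transpose}U$'' holds verbatim in this rectangular regime, so that no degenerate case involving the surplus $|k-l|$ dimensions is overlooked.
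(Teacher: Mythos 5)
Your proof is correct and takes essentially the same route as the paper: both arguments reduce the claim to the invariance of the subspace dimensions and of the principal angles under the action of $Q \in \Ort{q|\mathbb{I}|}$, and then conclude via the common representation~\eqref{eq:general_distance}. The only difference is one of detail---where the paper dispatches the angle invariance with ``by definition of principal angles,'' you justify it explicitly through the singular values of the rectangular cross-Gram matrix $V^{\transpose}U$, which is a faithful expansion of the same step rather than a different approach.
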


\begin{proof}
Fix $N=q|\mathbb{I}|$. By Fact~\ref{fact:subspace}, 
there exist $k,l\in\N$, ${\mathcal{V} \in \Grass{k}{N}}$, and ${\mathcal{U} \in \Grass{l}{N}}$ such that $\mathcal{V} = {\B}|_\mathbb{I}$ and $\mathcal{U} =\B^{\prime}|_\mathbb{I}$.  Let ${r = \min(k, l)}$. 
By  definition of principal angles,  ${\theta_i(Q \cdot \mathcal{V}, Q \cdot  \mathcal{U}) = \theta_i(\mathcal{V}, \mathcal{U})}$ for all ${Q \in O(N)}$ and ${i \in \mathbf{r}}$.  Furthermore, ${\dim Q \cdot \mathcal{V} = \dim\mathcal{V}}$  and ${\dim Q \cdot  \mathcal{U} = \dim \mathcal{U}}$. Thus,~\eqref{eq:general_distance} directly implies the desired identity.
\end{proof}

\begin{remark}[Permutation invariance] 
    Theorem~\ref{thm:rotation-invariance} implies 
    $$d(\Pi \cdot {\B}|_{\mathbb{I}}, \Pi \cdot \B^{\prime}|_{\mathbb{I}}) = d({\B}|_{\mathbb{I}}, \B^{\prime}|_{\mathbb{I}}),$$
    for all ${\Pi \in \text{Per}(q|\mathbb{I}|)}$, where ${\text{Per}(n)}$ is the set of all $n\times n$ permutation matrices. Thus, every metric in Table~\ref{tab:distances} does not depend on specific input-output partitions~\cite[p.568]{willems1986timeI}, making the metric suitable if such partitions are not defined \textit{a priori}.
\end{remark}

\noindent
Shift invariance imposes additional structure, allowing one to derive stronger properties for the metrics in Table~\ref{tab:distances} when applied to finite-horizon \gls{LTI} behaviors.

\begin{theorem}[Dependence on principal angles and complexity] \label{thm:angles-complexity} 
Let $\B|_{\mathbf{L}}, \B^{\prime}|_{\mathbf{L}} \in \LTIfh^q$, with ${L\in\N}$, and let \( d_* \)  be one of the metrics in Table~\ref{tab:distances},  with \( * \in \{ \kappa, \gamma, \pi \} \). 
 Then 
\begin{equation} \nn
d_*(\B|_{\mathbf{L}}, \B^{\prime}|_{\mathbf{L}})^2 =  
        \delta_*(\B|_{\mathbf{L}}, \B^{\prime}|_{\mathbf{L}})^2 + 
        \alpha_*^{2} q L \left|  c_L(\B) - c_{{L}}(\B^{\prime}) \right| . \label{eq:metric-dependence-angles-complexity}
\end{equation} 
\end{theorem}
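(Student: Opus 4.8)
The plan is to reduce the statement to a single substitution combining two ingredients already available in the excerpt: the common decomposition~\eqref{eq:general_distance} of each metric into an angular part $\delta_*$ and a dimension-gap part, and the definition~\eqref{eq:complexity} of complexity as a normalized dimension. No new geometry is needed; the entire content is the observation that, for restrictions to a common horizon, the dimension mismatch term is exactly the complexity gap scaled by $qL$.

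First I would invoke the Remark containing~\eqref{eq:general_distance}, which asserts that every metric $d_*$ in Table~\ref{tab:distances} satisfies $d_*(\mathcal{V},\mathcal{U})^2 = \delta_*(\mathcal{V},\mathcal{U})^2 + \alpha_*^2\,|\dim\mathcal{V}-\dim\mathcal{U}|$ for all subspaces $\mathcal{V},\mathcal{U}$ on $\Grass{\infty}{\infty}$. Since $\B|_{\mathbf{L}},\B^{\prime}|_{\mathbf{L}}\in\LTIfh^q$ are finite-dimensional subspaces (Fact~\ref{fact:subspace}), hence points of $\Grass{\infty}{\infty}$, I would apply this with $\mathcal{V}=\B|_{\mathbf{L}}$ and $\mathcal{U}=\B^{\prime}|_{\mathbf{L}}$ to obtain
$$d_*(\B|_{\mathbf{L}},\B^{\prime}|_{\mathbf{L}})^2 = \delta_*(\B|_{\mathbf{L}},\B^{\prime}|_{\mathbf{L}})^2 + \alpha_*^2\,\bigl|\dim\B|_{\mathbf{L}}-\dim\B^{\prime}|_{\mathbf{L}}\bigr|.$$

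The remaining step is to rewrite the dimension gap via complexity. By the definition~\eqref{eq:complexity}, valid precisely because $\B,\B^{\prime}\in\LTI^q$, one has $c_L(\B)=\dim\B|_{\mathbf{L}}/(qL)$, so $\dim\B|_{\mathbf{L}}=qL\,c_L(\B)$ and likewise $\dim\B^{\prime}|_{\mathbf{L}}=qL\,c_L(\B^{\prime})$. Factoring out the common positive scalar $qL$ then gives $\bigl|\dim\B|_{\mathbf{L}}-\dim\B^{\prime}|_{\mathbf{L}}\bigr| = qL\,|c_L(\B)-c_L(\B^{\prime})|$, and substituting this into the displayed identity yields the claimed equation for every $*\in\{\kappa,\gamma,\pi\}$.

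I do not expect a genuine obstacle: the statement is essentially a rephrasing of~\eqref{eq:general_distance} through the lens of~\eqref{eq:complexity}, and the factor $\alpha_*^2$ is carried along unchanged. The only point requiring a line of care is that the complexity $c_L$ must be evaluated at the underlying infinite-horizon systems $\B,\B^{\prime}\in\LTI^q$ consistently with the restrictions appearing in the metric; this is immediate from the very definition of $\LTIfh^q$ as the set of horizon-$\mathbf{L}$ restrictions of systems in $\LTI^q$ together with~\eqref{eq:complexity}, and notably does not require the hypothesis $L\ge\ell(\B)$ used in Lemma~\ref{lemma:subspace}.
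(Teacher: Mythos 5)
Your proposal is correct and follows essentially the same route as the paper, whose proof is exactly the one-line observation that the claim follows from~\eqref{eq:complexity} and~\eqref{eq:general_distance}: apply the common decomposition $d_*^2 = \delta_*^2 + \alpha_*^2\,|\dim\mathcal{V}-\dim\mathcal{U}|$ to $\mathcal{V}=\B|_{\mathbf{L}}$, $\mathcal{U}=\B^{\prime}|_{\mathbf{L}}$, and substitute $\dim\B|_{\mathbf{L}} = qL\,c_L(\B)$. Your closing remark that the hypothesis $L\ge\ell(\B)$ from Lemma~\ref{lemma:subspace} is not needed is also accurate and consistent with the theorem's assumption of arbitrary $L\in\N$.
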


\noindent
The claim follows directly from~\eqref{eq:complexity} and~\eqref{eq:general_distance}.

\begin{remark}[Dependence on integer invariants]
Theorem~\ref{thm:angles-complexity}  can be refined to explicitly depend on integer invariants (e.g., order) under some assumptions. For example, in view of Lemma~\ref{lemma:subspace},  if ${m(\B) = m(\B^{\prime})}$  and ${L \ge \max(\ell(\B), \ell(\B^{\prime}))}$, then  
${d_{*}(\B|_{\mathbf{L}}, \B^{\prime}|_{\mathbf{L}})^2 = 
        \delta_*(\B|_{\mathbf{L}}, \B^{\prime}|_{\mathbf{L}})^2 + \alpha_*^2 |n(\B)-n(\B^{\prime})|.}$
Thus, \( d_* \) explicitly depends (linearly) only on the order mismatch, provided the time horizon is sufficiently long.
\end{remark}

\subsection{Modeling as a minimum distance problem} \label{ssec:application}

We now show that the proposed metrics  define natural utility functions that explicitly depend on both system complexity and misfit, thus addressing the modeling puzzle in Section~\ref{sec:puzzle}. To this end, we recall the notion of \gls{MPUM} first introduced in~\cite[Definition 4]{willems1986timeII}.

\begin{definition}[Most powerful unfalsified model] ~\cite{willems1986timeII} \label{def:MPUM}
Given a model class $\mathcal{M}\subseteq 2^{\mathcal{W}}$ and a data set ${ \mathcal{D} \subseteq \mathcal{W}}$, the \textit{most powerful unfalsified model ${\B_{\textup{mpum}}(\mathcal{D})}$ in the model class $\mathcal{M}$ based on the data set $\mathcal{D}$} is defined by three properties: (i) ${\B_{\textup{mpum}}(\mathcal{D}) \in \M}$,
(ii) ${\mathcal{D} \subseteq \B_{\textup{mpum}}(\mathcal{D})}$, (iii) and  ${\B \in \M}$ and ${\mathcal{D} \subseteq \B}$ imply ${\B_{\textup{mpum}}(\mathcal{D}) \subseteq \B}$.
\end{definition}
\noindent
The \gls{MPUM} may not exist, but when it exists it is unique~\cite{willems1986timeII}.
The \gls{MPUM} in the model class $\LTI^q$ always exists and is~\cite{willems1986timeII} 
\beq \nn \label{eq:MPUM_linear}
\B_{\text{mpum}}(\mathcal{D}) = \overline{\Span}\{\mathcal{D}, \sigma \mathcal{D}, \sigma^2 \mathcal{D}, \ldots \}, 
\eeq 
where $\overline{\Span}(\mathcal{S})$ denotes the closure of the span of the set $\mathcal{S}$ with respect to the topology of pointwise convergence.

Next, we define misfit and utility functions that are instrumental to characterize modeling as a geometric problem.  Given a model ${\mathcal{B} \in \LTI^q}$, a data set ${\mathcal{D} \subseteq \mathcal{W}}$, a time horizon ${L\in\N}$, 
and an index ${*\in\{ \kappa, \gamma, \pi\}}$, define the \textit{misfit function}
\beq \label{eq:misfit_function_fin}
    \epsilon_{*,L}(\mathcal{D}, \B) = \delta_{*}\left(\B_{\text{mpum}}(\mathcal{D})|_{\mathbf{L}}, \B|_{\mathbf{L}} \right)^{2}, 
\eeq
with ${\delta_*}$ defined as in~\eqref{eq:premetrics}, and the \textit{utility function}
\begin{multline} 
\mu_{*,L}(\mathcal{D},\B) = - \delta_{*} \left( \B_{\text{mpum}}(\mathcal{D})|_{\mathbf{L}}, \B|_{\mathbf{L}} \right)^{2}  \\
-\alpha_*^{2} q L \left| c_L( \B_{\text{mpum}}(\mathcal{D})) - c_L(\B) \right|, \label{eq:utility_function_fin}
\end{multline}
with $c_{L}$ and ${\alpha_*}$ defined as in~\eqref{eq:complexity} and~\eqref{eq:coefficients}, respectively.

The misfit function~\eqref{eq:misfit_function_fin} quantifies the mismatch between \( \mathcal{B}_{\text{mpum}}(\mathcal{D}) \) and \( \mathcal{B} \) over a given interval, while the utility function~\eqref{eq:utility_function_fin} trades off model complexity and misfit  discrepancies over the same interval.  The next result formalizes a key connection between these quantities and the metrics in Table~\ref{tab:distances}.

\begin{lemma}[Subspace distances as utility functions] \label{lemma:utility_function_property}
    Consider a model ${ \mathcal{B} \in \LTI^q}$, a data set ${ \mathcal{D} \subseteq \mathcal{W} }$,  a time horizon  ${ L \in \mathbb{N} }$, and an index ${ * \in \{ \kappa, \gamma, \pi \} }$. Then  
    $$\mu_{*,L}(\mathcal{D},\mathcal{B})  = - d_*\big(\mathcal{B}_{\textup{mpum}}(\mathcal{D})|_{\mathbf{L}}, \mathcal{B}|_{\mathbf{L}}\big)^{2} ,$$
    where \( d_* \) is defined as in Table~\ref{tab:distances}.
\end{lemma}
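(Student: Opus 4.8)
The plan is to recognize this as an essentially immediate consequence of Theorem~\ref{thm:angles-complexity} once the well-definedness of both arguments of the distance is confirmed. The utility function~\eqref{eq:utility_function_fin} and the squared metric expansion supplied by Theorem~\ref{thm:angles-complexity} are, term for term, negatives of one another, so the proof reduces to matching the two formulas after checking that the hypotheses of that theorem are met.

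First I would verify that both $\mathcal{B}_{\textup{mpum}}(\mathcal{D})|_{\mathbf{L}}$ and $\mathcal{B}|_{\mathbf{L}}$ belong to $\LTIfh^q$. As noted in the text, the \gls{MPUM} in the model class $\LTI^q$ always exists and is itself an element of $\LTI^q$; combined with the standing assumption $\mathcal{B} \in \LTI^q$, this places both $\mathcal{B}_{\textup{mpum}}(\mathcal{D})$ and $\mathcal{B}$ in $\LTI^q$, so their restrictions to $\mathbf{L}$ are finite-horizon \gls{LTI} behaviors of $q$ variables. This legitimizes applying Theorem~\ref{thm:angles-complexity} with $\mathcal{B}^{\prime} = \mathcal{B}_{\textup{mpum}}(\mathcal{D})$, which gives
$$
d_*\big(\mathcal{B}|_{\mathbf{L}}, \mathcal{B}_{\textup{mpum}}(\mathcal{D})|_{\mathbf{L}}\big)^{2} = \delta_*\big(\mathcal{B}|_{\mathbf{L}}, \mathcal{B}_{\textup{mpum}}(\mathcal{D})|_{\mathbf{L}}\big)^{2} + \alpha_*^{2} qL\left| c_L(\mathcal{B}) - c_L(\mathcal{B}_{\textup{mpum}}(\mathcal{D})) \right|.
$$

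Next I would invoke the symmetry of both the metric $d_*$ (and likewise of $\delta_*$) and of the absolute value $|c_L(\mathcal{B}) - c_L(\mathcal{B}_{\textup{mpum}}(\mathcal{D}))|$ to rewrite the right-hand side with the arguments in the order appearing in~\eqref{eq:misfit_function_fin} and~\eqref{eq:utility_function_fin}, namely with $\mathcal{B}_{\textup{mpum}}(\mathcal{D})|_{\mathbf{L}}$ listed first. Comparing the resulting expression directly against the definition of the utility function~\eqref{eq:utility_function_fin}, the two summands on the right coincide exactly with the two negated terms defining $\mu_{*,L}(\mathcal{D},\mathcal{B})$, which yields $\mu_{*,L}(\mathcal{D},\mathcal{B}) = -\,d_*\big(\mathcal{B}_{\textup{mpum}}(\mathcal{D})|_{\mathbf{L}}, \mathcal{B}|_{\mathbf{L}}\big)^{2}$.

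There is no substantive obstacle here; the only point requiring care is the bookkeeping justification that the theorem applies, i.e., confirming the \gls{MPUM} lies in $\LTI^q$ so that both restricted behaviors sit in $\LTIfh^q$ and the complexity-to-dimension conversion of~\eqref{eq:complexity} is valid over the horizon $\mathbf{L}$. The result is therefore best presented as a one-line corollary of Theorem~\ref{thm:angles-complexity} together with the definitions~\eqref{eq:misfit_function_fin}--\eqref{eq:utility_function_fin}, rather than as an independent computation.
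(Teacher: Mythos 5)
Your proposal is correct and takes essentially the same route as the paper: the paper's one-line proof derives the identity directly from \eqref{eq:complexity}, \eqref{eq:general_distance}, and \eqref{eq:utility_function_fin}, which is precisely the content of Theorem~\ref{thm:angles-complexity} that you invoke (that theorem being itself the combination of \eqref{eq:complexity} and \eqref{eq:general_distance}). Your additional bookkeeping---existence of the \gls{MPUM} in $\LTI^q$ so that both restrictions lie in $\LTIfh^q$, and symmetry of $d_*$ and $\delta_*$---is sound and merely makes explicit what the paper leaves implicit.
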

\noindent

\noindent
The claim follows directly from~\eqref{eq:complexity},~\eqref{eq:general_distance}, and~\eqref{eq:utility_function_fin}.

Lemma~\ref{lemma:utility_function_property} has key implications for the modeling problem, providing a geometric characterization of the  complexity–misfit  trade-off.  Specifically, the optimization problem
\beq \label{eq:opt_problem_1}
\underset{\B \in \mathcal{M}}{\mbox{maximize}} \quad \mu_{*,L}(\mathcal{D},\B)
\eeq
with ${\mathcal{M}\subseteq \LTI^q}$ and $\mu_{*,L}$ defined as in~\eqref{eq:utility_function_fin}, is equivalent to the optimization problem
\begin{equation} \label{eq:opt_problem_2}
\underset{\B \in \mathcal{M}}{\mbox{minimize}} \quad d_*(\B_{\textup{mpum}}(\mathcal{D})|_{\mathbf{L}},\B|_{\mathbf{L}})^{2},
\end{equation}
with $d_*$ defined as in Table~\ref{tab:distances}, in the sense that their feasible sets coincide and their objective functions are identical up to a sign change.
In other words, ``optimal'' modeling according to the given utility function is a minimum distance problem.

\begin{theorem}[Modeling as a minimum distance problem] \label{thm:optimization_equivalence}
Consider a model class ${\mathcal{M} \subseteq \LTI^q}$, a data set ${ \mathcal{D} \subseteq \mathcal{W} }$, a time horizon ${ L \in \mathbb{N} }$, and an index ${ * \in \{ \kappa, \gamma, \pi \} }$. Then the optimization problems~\eqref{eq:opt_problem_1} and~\eqref{eq:opt_problem_2} are equivalent. Consequently, the following properties hold.
\begin{itemize}
    \item[(a)] A model is optimal for one problem if and only if it is optimal for the other.
    \item[(b)] The optimal values differ only in sign:
    \begin{equation} \nn
        \max_{\B \in \mathcal{M}} \mu_{*,L}(\mathcal{D},\B) = - \min_{\B \in \mathcal{M}} d_{*}(\B_{\textup{mpum}}(\mathcal{D})|_{\mathbf{L}}, \B|_{\mathbf{L}})^{2}.
    \end{equation}
    \item[(c)] The set of optimizers is the same:
    \begin{equation}\nn
        \arg\max_{\B \in \mathcal{M}} \mu_{*,L}(\mathcal{D},\B) = \arg\min_{\B \in \mathcal{M}} d_{*}(\B_{\textup{mpum}}(\mathcal{D})|_{\mathbf{L}}, \B|_{\mathbf{L}})^{2}.
    \end{equation}
\end{itemize}
\end{theorem}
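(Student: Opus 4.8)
The plan is to establish the equivalence of~\eqref{eq:opt_problem_1} and~\eqref{eq:opt_problem_2} by invoking Lemma~\ref{lemma:utility_function_property}, which already does the heavy lifting: it identifies the utility function with the negative squared distance. The core observation is that both optimization problems range over the same model class $\mathcal{M} \subseteq \LTI^q$, so their feasible sets are identical by construction. What remains is to relate the objectives.

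First I would note that, by Lemma~\ref{lemma:utility_function_property}, for every feasible $\B \in \mathcal{M}$ we have the pointwise identity $\mu_{*,L}(\mathcal{D},\B) = - d_*(\B_{\textup{mpum}}(\mathcal{D})|_{\mathbf{L}}, \B|_{\mathbf{L}})^{2}$. This identity holds for each $\B$ individually, not merely at the optimum, which is the key fact that drives all three conclusions. Since negation reverses the order on $\R$, maximizing $\mu_{*,L}(\mathcal{D},\B)$ over $\mathcal{M}$ is the same as minimizing $d_*(\B_{\textup{mpum}}(\mathcal{D})|_{\mathbf{L}}, \B|_{\mathbf{L}})^{2}$ over $\mathcal{M}$. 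This establishes the equivalence claimed in the theorem statement.

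For the three consequences, I would argue as follows. For~(a), a model $\B^\star$ is optimal for~\eqref{eq:opt_problem_1} iff $\mu_{*,L}(\mathcal{D},\B^\star) \ge \mu_{*,L}(\mathcal{D},\B)$ for all feasible $\B$, which by the pointwise identity and order reversal is equivalent to $d_*(\cdots,\B^\star)^2 \le d_*(\cdots,\B)^2$ for all feasible $\B$, i.e. optimality for~\eqref{eq:opt_problem_2}. For~(b), applying $\max$ to both sides of the identity and using the general fact $\max_{\B}(-f(\B)) = -\min_{\B} f(\B)$ yields the stated relation between optimal values. For~(c), the set of maximizers of $\mu_{*,L}$ equals the set of minimizers of $d_*^2$, since these sets are defined by the pointwise comparisons that (a) shows to coincide; alternatively, (c) is an immediate restatement of (a).

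I do not anticipate a genuine obstacle here, since Lemma~\ref{lemma:utility_function_property} supplies the substantive content and the rest is the elementary calculus of negating an objective in an optimization problem. The one point requiring minor care is whether the optima are attained: the identities in~(b) and~(c) involving $\max$, $\min$, and $\arg$ implicitly presume existence of optimizers. I would address this by phrasing the equivalence at the level of the extended-real-valued supremum/infimum so that the order-reversal identity $\sup_{\B}(-f(\B)) = -\inf_{\B} f(\B)$ holds unconditionally, and noting that the attaining sets coincide whenever either optimum is attained, so that the $\max$/$\min$/$\arg$ statements are valid precisely when optimizers exist.
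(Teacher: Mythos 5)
Your proposal is correct and follows essentially the same route as the paper, which proves the theorem via the discussion preceding it: by Lemma~\ref{lemma:utility_function_property} the objectives of~\eqref{eq:opt_problem_1} and~\eqref{eq:opt_problem_2} are pointwise negatives of each other over the common feasible set $\mathcal{M}$, and (a)--(c) follow from elementary order reversal under negation. Your added care about attainment (phrasing the identity as $\sup_{\B}(-f(\B)) = -\inf_{\B} f(\B)$ when optimizers may not exist) is a minor refinement the paper leaves implicit, but it does not change the argument.
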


\noindent
The role of the utility function becomes particularly interesting when considering the model class of \gls{LTI} systems unfalsified by a given data set \( \mathcal{D} \subseteq \mathcal{W} \), defined as
\beq \label{eq:model_class_compatible_with_data}
\mathcal{M} =\left\{\B\in \LTI^q \,|\, \mathcal{D} \subseteq \B  \right\} .
\eeq
As expected, \( \mathcal{B}_{\text{mpum}}(\mathcal{D}) \) is an optimizer of~\eqref{eq:opt_problem_1} and~\eqref{eq:opt_problem_2}. Moreover, any  optimizer necessarily satisfies  
\beq \label{eq:identity_B_MPUM}
\mathcal{B}|_{\mathbf{L}} = \mathcal{B}_{\textup{mpum}}(\mathcal{D})|_{\mathbf{L}}.
\eeq

\begin{corollary} [Optimality of the \gls{MPUM}]\label{cor:optimization_model_class_unfalsified models}
Consider a data set ${ \mathcal{D} \subseteq \mathcal{W} }$, the model class ${\mathcal{M}}$ defined in~\eqref{eq:model_class_compatible_with_data}, a time horizon ${ L \in \mathbb{N} }$, and an index ${ * \in \{ \kappa, \gamma, \pi \} }$. Then $\B_{\textup{mpum}}(\mathcal{D})$ is an optimizer of~\eqref{eq:opt_problem_1} and~\eqref{eq:opt_problem_2}.
Moreover, any optimizer ${\B\in\mathcal{M}}$ of~\eqref{eq:opt_problem_1} or~\eqref{eq:opt_problem_2} is such that~\eqref{eq:identity_B_MPUM} holds. 
\end{corollary}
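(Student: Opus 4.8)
The plan is to prove the two assertions separately, leaning on Theorem~\ref{thm:optimization_equivalence} (equivalence of the two problems) so that it suffices to reason about the minimum distance problem~\eqref{eq:opt_problem_2}.

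First I would verify that $\B_{\textup{mpum}}(\mathcal{D})$ is feasible and optimal. Feasibility is immediate: by properties (i)--(ii) of Definition~\ref{def:MPUM}, $\B_{\textup{mpum}}(\mathcal{D}) \in \LTI^q$ and $\mathcal{D} \subseteq \B_{\textup{mpum}}(\mathcal{D})$, so $\B_{\textup{mpum}}(\mathcal{D}) \in \mathcal{M}$ with $\mathcal{M}$ as in~\eqref{eq:model_class_compatible_with_data}. Optimality then follows because the objective of~\eqref{eq:opt_problem_2} is a squared metric $d_*(\cdot,\cdot)^2 \ge 0$ that vanishes at $\B = \B_{\textup{mpum}}(\mathcal{D})$, since $d_*$ is a genuine metric on $\Lfh$ by Theorem~\ref{thm:metrics_restricted_behaviors_any_dimension} and hence $d_*(\B_{\textup{mpum}}(\mathcal{D})|_{\mathbf{L}},\B_{\textup{mpum}}(\mathcal{D})|_{\mathbf{L}}) = 0$. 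A nonnegative objective attaining $0$ is globally minimal, so $\B_{\textup{mpum}}(\mathcal{D})$ optimizes~\eqref{eq:opt_problem_2}, and by Theorem~\ref{thm:optimization_equivalence} it optimizes~\eqref{eq:opt_problem_1} as well.

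Second, for the converse characterization, let $\B \in \mathcal{M}$ be any optimizer. By the previous paragraph the optimal value of~\eqref{eq:opt_problem_2} is $0$, so $d_*(\B_{\textup{mpum}}(\mathcal{D})|_{\mathbf{L}}, \B|_{\mathbf{L}})^2 = 0$. Invoking again the fact that $d_*$ is a metric on $\Lfh$ (Theorem~\ref{thm:metrics_restricted_behaviors_any_dimension}), the identity of indiscernibles forces $\B|_{\mathbf{L}} = \B_{\textup{mpum}}(\mathcal{D})|_{\mathbf{L}}$, which is exactly~\eqref{eq:identity_B_MPUM}. By the equivalence in Theorem~\ref{thm:optimization_equivalence}(c), the optimizer sets of~\eqref{eq:opt_problem_1} and~\eqref{eq:opt_problem_2} coincide, so the same conclusion holds for any optimizer of~\eqref{eq:opt_problem_1}.

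The argument is essentially a bookkeeping exercise once the metric property is in hand; the only point requiring care is that the objective is the \emph{squared} distance to a fixed feasible point, so the positivity and identity-of-indiscernibles axioms of the metric do all the work. The main (minor) obstacle is simply to state cleanly that $\B_{\textup{mpum}}(\mathcal{D}) \in \mathcal{M}$ — i.e., that the MPUM in $\LTI^q$ is itself unfalsified by $\mathcal{D}$ — which is immediate from Definition~\ref{def:MPUM} but should be noted explicitly rather than assumed.
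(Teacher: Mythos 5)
Your proposal is correct, and its first half coincides with the paper's proof: feasibility of $\B_{\textup{mpum}}(\mathcal{D})$ from Definition~\ref{def:MPUM}, optimality because the nonnegative objective vanishes there, and transfer between~\eqref{eq:opt_problem_1} and~\eqref{eq:opt_problem_2} via Theorem~\ref{thm:optimization_equivalence} (the paper merely works with the maximization problem first, you with the minimization problem, which is immaterial). The second half, however, takes a genuinely different route. You conclude $\B|_{\mathbf{L}} = \B_{\textup{mpum}}(\mathcal{D})|_{\mathbf{L}}$ in one step from $d_* = 0$ via the identity of indiscernibles, citing Theorem~\ref{thm:metrics_restricted_behaviors_any_dimension}. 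The paper instead unpacks $d_* = 0$ through the decomposition~\eqref{eq:general_distance} (Theorem~\ref{thm:angles-complexity}) to get $c_L(\B_{\textup{mpum}}(\mathcal{D})) = c_L(\B)$ and $\delta_*(\B_{\textup{mpum}}(\mathcal{D})|_{\mathbf{L}}, \B|_{\mathbf{L}}) = 0$, then invokes \cite[Lemma 13]{ye2016schubert} (vanishing principal angles force containment one way or the other) together with property (iii) of Definition~\ref{def:MPUM} (every unfalsified $\B \in \mathcal{M}$ contains the MPUM, fixing the direction) to obtain $\B_{\textup{mpum}}(\mathcal{D})|_{\mathbf{L}} \subseteq \B|_{\mathbf{L}}$, and finally upgrades containment to equality by the dimension count implied by equal complexity. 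Your argument is shorter, but it places all the weight on the metric axiom on $\Lfh$, which is slightly delicate: points of $\Grass{\infty}{\infty}$ are equivalence classes under the inclusion maps~\eqref{eq:inclusion_maps}, so ``$d_* = 0$ implies equality'' a priori means equality only up to those canonical identifications. Since both restricted behaviors here live in the same ambient space $\R^{qL}$, where the identifications conflate no distinct subspaces, your application is sound — but this is a point worth stating, and the paper's more explicit route sidesteps it entirely while additionally revealing the structural content of optimality (containment of the MPUM's restriction in every unfalsified model's restriction, plus matching complexity). In short: correct proof, same skeleton for existence of an optimizer, but a more abstract metric-axiom argument where the paper does explicit principal-angle and dimension bookkeeping.
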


\begin{proof} By definition, ${\B_{\textup{mpum}}(\mathcal{D})\in\mathcal{M}}$. Since $d_*$ is non-negative, any ${\B\in\mathcal{M}}$ satisfying  $d_*(\B_{\textup{mpum}}(\mathcal{D})|_{\mathbf{L}},\B|_{\mathbf{L}})=0$ is an optimizer of~\eqref{eq:opt_problem_1}. Therefore, $\B_{\textup{mpum}}(\mathcal{D})$ is an optimizer of~\eqref{eq:opt_problem_1}. By Theorem~\ref{thm:optimization_equivalence}, $\B_{\textup{mpum}}(\mathcal{D})$ is also an optimizer of~\eqref{eq:opt_problem_2}, proving the first statement. Let ${\B\in\mathcal{M}}$ be an optimizer of~\eqref{eq:opt_problem_1} or~\eqref{eq:opt_problem_2}. By Theorem~\ref{thm:optimization_equivalence}, $\B$ must satisfy ${d_*(\B_{\textup{mpum}}(\mathcal{D})|_{\mathbf{L}},\B|_{\mathbf{L}})=0}$.
By Theorem~\ref{thm:angles-complexity}, $c_L(\B_{\textup{mpum}}(\mathcal{D}))=c_L(\B)$ and $\delta_*(\B_{\textup{mpum}}(\mathcal{D})|_{\mathbf{L}},\B|_{\mathbf{L}}) = 0$. Hence, by~\cite[Lemma 13]{ye2016schubert} and by definition of~\gls{MPUM}, ${\B_{\textup{mpum}}(\mathcal{D})|_{\mathbf{L}} \subseteq \B|_{\mathbf{L}}}$. Finally,  ${c_L(\B_{\textup{mpum}}(\mathcal{D})) = c_L(\B)}$ implies ${\dim \B_{\textup{mpum}}(\mathcal{D})|_{\mathbf{L}} = \dim \B|_{\mathbf{L}}}$ and, thus, ${\B_{\textup{mpum}}(\mathcal{D})|_{\mathbf{L}} = \B|_{\mathbf{L}}}$.
\end{proof}

\section{Anomaly detection via subspace distances} \label{sec:application}

Anomaly detection from time-series data is central to many applications~\cite{chandola2009}, including structural health monitoring, seismic analysis, and audio processing. The goal is to identify deviations from nominal behavior that may indicate faults, environmental changes, or unexpected events.
We illustrate the relevance of our metrics in this setting, where their finer resolution yields improved sensitivity over existing distances. Rather than comparing raw signals, we adopt a system-theoretic approach inspired by~\cite{de2002subspace}, evaluating differences between linear behaviors defined by data matrices.

We consider a nominal behavior $\mathcal{B}$ defined by a 0.2~Hz sine wave sampled uniformly over the interval $\mathbb{I} = [0,\, 250]$. The behavior $\mathcal{B}$ is modeled as the output of a second-order autonomous \gls{LTI} system. Anomalies are modeled as additive harmonic perturbations: a 0.1~Hz sine wave is injected for $t \in [50,\, 100]$ (Fault~1), and an additional 0.05~Hz sine wave is injected for $t \in [150,\, 200]$ (Fault~2). Both perturbations induce abrupt deviations from the nominal behavior.

Inspired by the approach outlined in~\cite{padoan2022behavioral}, the signal $y$ is processed online via Hankel matrices $H_t \in \mathbb{R}^{T \times \tau}$, where each column collects ${T = 10}$  output samples, and the matrix includes ${\tau = 16}$  such time-shifted segments. Under the stated assumptions, $\Image(H_t)$ defines a finite-horizon (linear) behavior. The nominal finite-horizon behavior $\mathcal{B}|_{\mathbf{T}}$ is constructed analogously from offline data. At each time $t$, we compute the distance between $\Image(H_t)$ and $\mathcal{B}|_{\mathbf{T}}$ using the chordal metric in Table~\ref{tab:distances} and the $L$-gap metric from~\cite{padoan2022behavioral}. While the chordal metric reflects differences in both geometry and dimension, the $L$-gap metric assigns maximal distance when the subspace dimensions differ, limiting its resolution to anomaly severity. 

Figure~\ref{fig:anomaly_detection} shows that all metrics yield small distances under nominal conditions. During anomalies, however, the signal acquires additional spectral content, increasing the rank of the Hankel matrix $H_t$. The chordal distance captures both the onset and severity of these changes, scaling with the number of added harmonics. Regime switches cause transients during which the behavior defined by the sliding window contains ``mixed'' dynamics. After transients, the behavior defined by the sliding window is linear, and the distance reflects the true extent of deviation.
In contrast, the $L$-gap metric only detects the presence of anomalies, but saturates at its maximum value when subspace dimensions differ, making it insensitive to anomaly severity.  This illustrates the advantage of the proposed metrics: they support dimension-agnostic comparisons and provide graded sensitivity to deviations from nominal behavior.

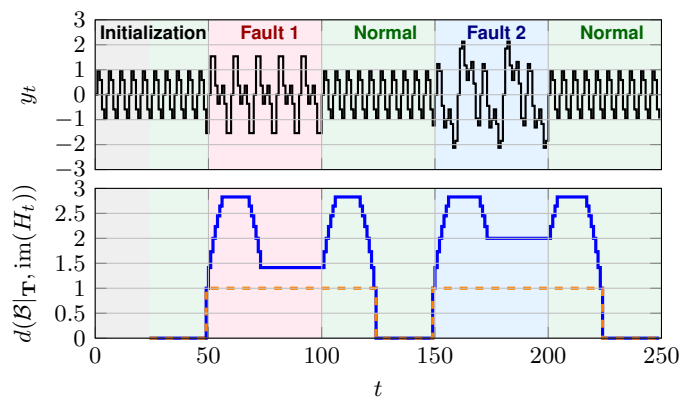
\begin{figure}[t!]
\centering

\definecolor{grayarea}{rgb}{0.85, 0.85, 0.85}
\definecolor{greenarea}{rgb}{0.80, 0.93, 0.85}
\definecolor{redarea}{rgb}{1.00, 0.80, 0.88}
\definecolor{bluearea}{rgb}{0.75, 0.88, 1.00}

\begin{tikzpicture}
\begin{groupplot}[
    group style={
        group size=1 by 2,
        vertical sep=0.25cm
    },
    width=0.85\linewidth,
    height=0.225\linewidth,
    xmin=0, xmax=250,
    xtick distance=50,
    axis on top,
    grid=both,
    scale only axis,
    tick label style={font=\small},
    label style={font=\small},
    title style={font=\small}
]

\nextgroupplot[black,
    ylabel={$y_t$},
    ymin=-3, ymax=3,
    xticklabels=\empty,
    ytick distance=1,
    clip=false
]
\addplot [draw=none, fill=grayarea, opacity=0.4] coordinates {(0, -3) (24, -3) (24, 3) (0, 3)};
\addplot [draw=none, fill=greenarea, opacity=0.4] coordinates {(24, -3)  (24, 3) (50, 3) (50, -3)};
\addplot [draw=none, fill=redarea, opacity=0.4] coordinates {(50, -3) (100, -3) (100, 3) (50, 3)};
\addplot [draw=none, fill=greenarea, opacity=0.4] coordinates {(100, -3) (150, -3) (150, 3) (100, 3)};
\addplot [draw=none, fill=bluearea, opacity=0.4] coordinates {(150, -3) (200, -3) (200, 3) (150, 3)};
\addplot [draw=none, fill=greenarea, opacity=0.4] coordinates {(200, -3) (250, -3) (250, 3) (200, 3)};
\addplot [const plot, thick, black] table [x index=0, y index=1, col sep=comma] {output_signal.csv};

\node[anchor=west, font=\scriptsize, text=black] at (axis cs:-2, 2.5) {\textsf{\textbf{Initialization}}};
\node[anchor=west, font=\scriptsize, text=red!60!black] at (axis cs:60, 2.5) {\textsf{\textbf{Fault 1}}};
\node[anchor=west, font=\scriptsize, text=green!40!black] at (axis cs:110, 2.5) {\textsf{\textbf{Normal}}};
\node[anchor=west, font=\scriptsize, text=blue!50!black] at (axis cs:160, 2.5) {\textsf{\textbf{Fault 2}}};
\node[anchor=west, font=\scriptsize, text=green!40!black] at (axis cs:210, 2.5) {\textsf{\textbf{Normal}}};

\nextgroupplot[black,
    ylabel={$d(\mathcal{B}|_{\mathbf{T}},\Image (H_t))$},
    xlabel={$t$},
    ymin=0, ymax=3,
    ytick distance=0.5,
    clip=false,
    legend style={
        at={(0.03,0.97)},
        anchor=north west,
        font=\scriptsize,
        draw=none,
        fill=none
    }
]

\addplot [draw=none, fill=grayarea, opacity=0.4] coordinates {(0, 0) (24, 0) (24, 3) (0, 3)};
\addplot [draw=none, fill=greenarea, opacity=0.4] coordinates {(24, 0)  (24, 3) (50, 3) (50, 0)};
\addplot [draw=none, fill=redarea, opacity=0.4] coordinates {(50, 0) (100, 0) (100, 3) (50, 3)};
\addplot [draw=none, fill=greenarea, opacity=0.4] coordinates {(100, 0) (150, 0) (150, 3) (100, 3)};
\addplot [draw=none, fill=bluearea, opacity=0.4] coordinates {(150, 0) (200, 0) (200, 3) (150, 3)};
\addplot [draw=none, fill=greenarea, opacity=0.4] coordinates {(200, 0) (250, 0) (250, 3) (200, 3)};
\addplot [const plot, color=blue, very thick] table [x index=0, y index=1, col sep=comma] {distance_chordal.csv};
\addplot [const plot, color=orange, dashed, very thick] table [x index=0, y index=1, col sep=comma] {distance_gap.csv};

\end{groupplot}
\end{tikzpicture}

\caption{Top: Time history of the output signal $y$. Bottom:  Evolution of the distance between the behavior defined by the image of a Hankel matrix $H_t \in \mathbb{R}^{T \times \tau}$ and the nominal behavior $\mathcal{B}|_{\mathbf{T}}$. Distances are computed using the chordal (solid) and $L$-gap (dashed) metrics. Shaded regions indicate initialization (gray), nominal (green), and anomalous (red and blue) intervals. No anomaly detection is performed during initialization. }
\label{fig:anomaly_detection}
\end{figure}

\section{Conclusion} \label{sec:conclusion}

We introduced a class of distances for finite-horizon \gls{LTI} behaviors that remain invariant under coordinate transformations, rotations, and permutations, enabling comparisons across dimensions. The proposed metrics also capture complexity–misfit trade-offs, offering a principled solution to a  modeling puzzle and a geometric interpretation of the \gls{MPUM}. Finally, we showed their practical value in anomaly detection, where they offer  improved resolution over existing distances. Future work should explore their application to time series clustering, system identification with prior knowledge, and data-driven control.

\bibliographystyle{IEEEtran}%
\bibliography{IEEEabrv,refs_min}

\end{document}